\theoremstyle{plain}
\newtheorem{theorem}{Theorem}[section]
\newtheorem*{main}{Main Result}
\newtheorem{corollary}[theorem]{Corollary}
\newtheorem{lemma}[theorem]{Lemma}
\newtheorem{proposition}[theorem]{Proposition}
\theoremstyle{remark}
\newtheorem{remark}{Remark}
\theoremstyle{definition}
\newtheorem{Prob}{Problem}
\title[Pseudo-differential Operators and the Large Coupling Limit]{Pseudo-differential Operators, Transmission Problems and the Large Coupling Limit.}
\author{Ikemefuna C. Agbanusi}
\address{Department of Mathematics, University of Illinois at Urbana-Champaign}
\email{agbanusi@illinois.edu}
\subjclass[2010]{Primary: 35J10; Secondary: 35S15, 35P20, 35B25, 35R05, 81Q10}
\keywords{Schr\"odinger operators, Non-smooth potentials, Large coupling limits, Transmission problems, Pseudo-differential operators}
\thanks{}
\date{}
\newcommand{\lap}[1]{\Delta#1}
\newcommand{\bnorm}[1]{\left\lVert#1\right\rVert}
\newcommand{\ang}[1]{\langle#1\rangle}
\newcommand{\ind}{\mathbf{1}}
\newcommand{\at}[2]{\left.#1\right|_{#2}}
\newcommand{\paren}[1]{\left(#1\right)}
\newcommand{\norm}[1]{\Vert#1\Vert}
\newcommand{\dpair}[2]{\langle#1,#2\rangle}
\newcommand{\abs}[1]{\left|#1\right|}
\newcommand{\brac}[1]{\left[#1\right]}
\newcommand{\til}[1]{\widetilde{#1}}
\newcommand{\PD}[2]{\frac{\partial#1}{\partial#2}}
\def\N{\mathbb{N}} 
\def\R{\mathbb{R}} 
\def\C{\mathbb{C}}
\newcommand{\cS}{\mathcal{S}} 
\newcommand{\cF}{\mathcal{F}}
\newcommand{\sN}{\mathscr{N}_{\lambda}}
\newcommand{\sK}{\mathscr{K}}
\newcommand{\sKl}{\mathscr{K}_\lambda}
\newcommand{\sD}{\mathscr{D}_\lambda}
\newcommand{\sW}{\mathscr{W}_\lambda}
\DeclareMathOperator{\kers}{Ker}
\DeclareMathOperator{\dom}{Dom}
\DeclareMathOperator{\supt}{supp}
\DeclareMathOperator{\op}{Op}
\DeclareMathOperator{\ran}{Ran}
\numberwithin{equation}{section}
\begin{document}
\begin{abstract}
In this paper we prove some new results and give new proofs of known results related to the 
large coupling limit for stationary Schr\"odinger operators. The operators we consider are of the form 
$-\lap +\lambda V(x)$ where $\lap$ is the Laplacian, $V(x)$ is a real valued piecewise--constant potential 
having a jump discontinuity across a smooth interface and $\lambda$ is the coupling constant. Our main
result is that the potential determines a non-local boundary condition on the interface and we systematically
exploit this fact to derive various results about the \emph{large coupling problem}. In particular, we obtain 
estimates for convergence rates and a description of the behavior of the spectrum of $-\lap +\lambda V(x)$ as 
$\lambda\nearrow\infty$.
\end{abstract}

\maketitle

\section{Introduction and Statement of Results}
\subsection{Background}
Historically, the large coupling problem is to understand the behavior of operators
of the form $H_{\lambda}:=-\lap + \lambda V$ as $\lambda\nearrow\infty$. Here
$\lap :=\partial^2_{x_1}+\ldots+\partial^2_{x_n}$ is the Euclidean Laplacian; $V$ is the multiplication operator 
corresponding to a real-valued potential, $V(x)$; and $\lambda$ is a positive parameter called the 
\emph{coupling constant}. The term $\lap$ governs the ``free evolution" of a particle while $V$ describes its 
interactions\,---\,with an external field or other particles, for example. Informally, the coupling parameter modulates the strength
of the relevant interactions and as such the problem is really the description of quantum particles under very strong interactions.

Common questions are the existence and properties of the limit operator;
the rate and mode of convergence; the asymptotic behavior of the spectrum; and the description scattering phenomena, to name a few. A related problem
is the study of the various semigroups these operators generate. For instance, one
could consider the standard semigroups $e^{tH_\lambda}$, $e^{itH_\lambda}$, and $\cos t\sqrt{H_\lambda}$, corresponding to the heat,
Schr\"odinger and wave semigroups respectively, and attempt to describe them in the large coupling limit.

These problems could be classified under {semi-classical analysis} but perhaps ought to 
be construed as {singular perturbation} problems. We will not review the state-of-the-art of these problems but only
point out that pseudo-differential operators ($\Psi$DOs)\,---\,and the related micro-local analysis\,---\,have provided the main impetus
behind the progress in our current understanding. We refer the reader to the excellent survey by  {\sc Robert} \cite{Robert:1998vn} for an
account on this and related issues.

\subsection{Outline of Results and Methods}
Our goal in this work is to apply basic $\Psi$DO techniques to certain large coupling problems. In contrast to other work, we focus on Schr\"odinger operators 
defined on \emph{bounded domains} in $\R^n$ and we only consider special types of interaction potentials.  As will become clear, the restriction to bounded domains
allows us to employ rather specific tools.

More concretely, let $\Omega\subset\R^n$ be a bounded, open domain with smooth, 
i.e. $C^\infty$, boundary which we denote by $\Gamma$. Let $\Omega_1\Subset\Omega$ be a compact inclusion also 
with smooth boundary $\Gamma_1$. We define the ``exterior region" $\Omega_2 :=\Omega\setminus\overline{\Omega}_1$, so that 
$\Omega =\Omega_1\cup\Gamma_1\cup\Omega_2$ as in Figure \ref{fig:domain_fig}. Furthermore, we assume that $\Gamma_1$ and $\Gamma$ are locally on one side of 
$\Omega_1$ and $\Omega$ respectively.

\begin{figure}
\begin{center}
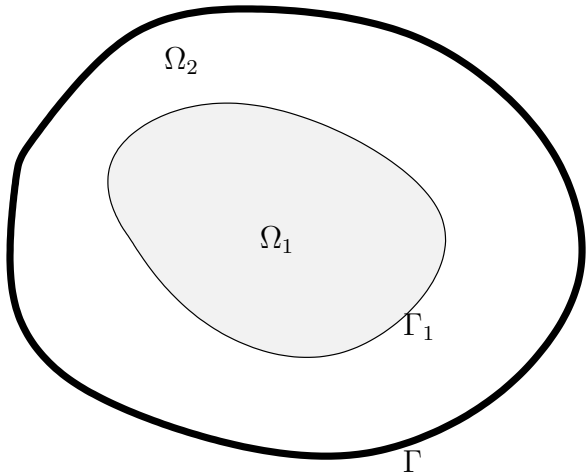
\end{center}
\caption{The domain $\Omega$}
\label{fig:domain_fig}
\end{figure}

Our  Schr\"odinger operator is of the form $A_{\lambda}:=-\lap+\lambda\ind_{{\Omega_1}}(x)$ with domain
\begin{equation}\label{eqn:dom_Doi}
\dom(A_{\lambda})=\{u(x)\in H^2(\Omega):\at{\partial_\nu u}{\Gamma}=0\}.
\end{equation}
Here $\ind_{E}(x)$ is the characteristic function of the set $E$, $\partial_\nu$ is the normal derivative and $H^k(\Omega)$
denotes the usual $L^2$ based Sobolev spaces. Note that the non-smooth interaction potential $\ind_{{\Omega_1}}(x)$ has
singularities along the interface $\Gamma_1$.  As for the usual large coupling problem on $R^n$, our first question is 
\begin{Prob}
Does $A_\lambda$ converge to a limit operator? If so, to what operator and at what rate?
\end{Prob}
For $\lambda>0$, standard results imply that the inverse $A_\lambda^{-1}$ exists and is bounded. Using quadratic forms for instance, one checks that $A_\lambda$ 
form a monotone sequence of operators. Abstract arguments (see {\sc Kato} \cite[Chap$.$ VIII, \S3]{Kato:1980kx}) then show the existence 
of a limit operator. A natural candidate for the limit operator is
 $A_{\infty}:=0\oplus B$ where
\begin{equation}\label{eqn:dom_Smol}
\dom(B)=\{v\in H^2(\Omega_2):\at{v}{\Gamma_1}=\at{\partial_\nu v}{\Gamma}=0\}; \quad Bv =-\lap v.
\end{equation}
We will later put this intuition on more solid ground; but these heuristics suggest \emph{large potentials} should be well approximated by Dirichlet boundary conditions.
This leads to the slightly more general problem of describing these operators for intermediate values of the coupling constant:
\begin{Prob}
Given $0<\lambda_0\leq\lambda<\infty$, find a boundary problem on the \emph{exterior domain}, $\Omega_2$, whose 
solutions closely approximate that of $A_\lambda u=f$.
\end{Prob}
One point of view is that boundary conditions are actually simplifications used to capture the fact that certain parameters in the system under study change rapidly
across an interface. Thus a solution to this question has practical implications for the numerical solution of PDEs where
boundary conditions are often difficult to implement. The same point has been made by {\sc Bardos--Rauch} in \cite{Bardos:1982fk} which
treats a large coupling problem for 1st order hyperbolic systems. This question also has implications for stochastic simulation algorithms
as shown in {\sc Agbanusi--Isaacson} \cite{Agbanusi:2014ys} and was one motivation for the study here. In fact, that paper deals with the time dependent version of our equations which were used in a model of diffusion to a stationary target.

What follows is one of our main results and gives an answer to the questions posed above. More precise statements will be made later:
\begin{main}\label{thm:main_result_boundary_pdo}
Let $0<\lambda_0\leq\lambda<\infty$ be fixed and suppose that  $f(x)$ has support in $\Omega_2$. If $u$ solves 
\begin{align*}
(-\lap+\lambda\ind_{_{\Omega_1}}(x))u&=f,\quad x\in\Omega;\\
\at{\partial_\nu u}{\Gamma}&=0,
\end{align*}
then $u_2$ defined by $u_2:=\at{u}{\Omega_2}$ satisfies the ``exterior" boundary value problem
\begin{equation}\label{eqn:ext_BVP}
\left\{\begin{aligned}
-\lap u_2 &=f,\quad x\in\Omega_2;\\
\at{u_2}{\Gamma_1}&=\mathscr{N}_{\lambda}\at{\paren{\partial_\nu u_2}}{\Gamma_1},\\
\at{\partial_\nu u_2}{\Gamma}&=0;
\end{aligned}
\right.
\end{equation}
where $\mathscr{N}_{\lambda}$ is a pseudodifferential operator depending on $\lambda$ and acting in $L^2(\Gamma_1)$.
\end{main}
There are good reasons for the particular assumptions on $f(x)$ in the above statement. For example, if one thinks of the corresponding diffusion equation, $f$ could be interpreted as a probability density/distribution. Thus the support condition on $f$ means that the distribution of the particles under consideration is outside the ``obstacle" $ \Omega_1$.

 As already hinted, in most applications, we wish to compare the operators $A_\lambda^{-1}$ and $B^{-1}$. A potential source of difficulty is their being defined on
different domains. To overcome this, we introduce the restriction operator:
\begin{equation*}
r_{_{\Omega_2}}f:=\at{f}{\Omega_2},
\end{equation*}
and the ``extension by zero" operator:
\begin{equation*}
e_{_{\Omega_2}}f:=
\begin{cases}
f, &x\in\Omega_2;\\
0, &x\in\Omega_1,
\end{cases}
\end{equation*}
and we observe that $r_{_{\Omega_2}}A^{-1}_{\lambda}e_{_{\Omega_2}}$ is now a bounded operator in $L^2(\Omega_2)$. Using our main result, we show
\begin{equation}
\norm{r_{_{\Omega_2}}A^{-1}_{\lambda}e_{_{\Omega_2}}-B^{-1}}_{op} =\mathcal{O}(\lambda^{-\frac{1}{2}}),
\end{equation}
as $\lambda\nearrow\infty$, which gives an estimate for the rate of convergence. The norm on the left is the operator norm and it is taken in $L^2(\Omega_2)$. 
A further consequence of our approach is that we obtain an estimate\,---\,cumbersome to state here (cf$.$ \eqref{eqn:spec_count_func_diff})\,---\,for
 $N(\mu;(r_{_{\Omega_2}}A^{-1}_{\lambda}e_{_{\Omega_2}}-B^{-1}))$. The function $N(\mu; T)$ is the \emph{spectral counting function} and it
counts the number of eigenvalues of the compact operator $T$ greater than $\mu$. These results show that 
one recovers the ``external" Dirichlet problem in the large coupling limit.

Our analysis rests on the observation that solutions to $A_\lambda u =f$ satisfy the following elliptic \emph{transmission problem}:
\begin{equation}\label{eqn:trans_PDE}
\left\{\begin{aligned}
(-\lap+\lambda)u_1 &=f_1;\quad x\in\Omega_1,\\
-\lap u_2 &=f_2;\quad x\in\Omega_2,
\end{aligned}
\right.
\end{equation}
where, for $i=1,2$, $f_i=\at{f}{\Omega_i}$; with the transmission condition on the interface $\Gamma_1$:
\begin{equation}\label{eqn:trans_BC}
\left\{\begin{aligned}
\at{u_1}{\Gamma_1}&=\at{u_2}{\Gamma_1},\\
\at{\partial_\nu u_1}{\Gamma_1}&=\at{\partial_\nu u_2}{\Gamma_1},
\end{aligned}
\right.
\end{equation}
and the ``external" boundary condition
\begin{align*}
\at{\partial_\nu u_2}{\Gamma}&=0.
\end{align*}
The proofs of our results are effected by constructing a parametrix for the system \eqref{eqn:trans_PDE}--\eqref{eqn:trans_BC}
 in a neighborhood of $\Gamma_1$. There are two key ideas here: the first, which goes
back {\sc Agmon} \cite{Agmon:1965fk}, is to treat $\lambda$ as an extra ``cotangent variable" in the parametrix construction; the second idea is to use 
a variant of the Calder\'on--Seeley--H\"ormander method of  reduction to the boundary (see, for instance, {\sc Chazarain--Piriou}
 \cite[Chap$.$ 5]{Chazarain:1982fk} for an exposition).
The ellipticity of the resulting equations and the transmission conditions \eqref{eqn:trans_BC} allow us to determine $\at{u}{\Gamma_1}$ and 
$\at{\partial_\nu u}{\Gamma_1}$ which in turn determine $\mathscr{N}_{\lambda}$ as a by-product. To apply this, we establish and exploit the following
Green's formula (cf$.$ Lemma \ref{lem:green_formula}) which may be of independent interest:
\[((r_{_{\Omega_2}}A^{-1}_{\lambda}e_{_{\Omega_2}}-B^{-1})f,g)_{L^2(\Omega_2)} =  -\dpair{\sN\gamma_{_1}u}{\gamma_{_1}v}_{L^2(\Gamma_1)};\quad f,g\in L^2(\Omega_2),\]
where $(\cdot,\cdot)_{L^2(\Omega_2)}$ and $\dpair{\cdot}{\cdot}_{L^2(\Gamma_1)}$ are inner products, $v=B^{-1}g$ and $u =r_{_{\Omega_2}}A^{-1}_{\lambda}e_{_{\Omega_2}}f$. 
The crucial thing is that our construction of the pseudo-differential operator $\mathscr{N}_{\lambda}$ comes with explicit
information on its symbol and it is an analysis of the dependence of $\mathscr{N}_{\lambda}$ on $\lambda$ which give the various estimates.

It is worthwhile to give another interpretation of our main result. Since the operators $\sN$ determine boundary
 conditions\,---\,albeit non-local ones\,---\,on $\Gamma_1$, we are entitled to view $\lambda$ as parametrizing a family of boundary value 
 problems in the exterior domain $\Omega_2$. Alternatively, 
these boundary problems correspond to certain \emph{realizations} of the Laplacian acting in $L^2(\Omega_2)$. Indeed our results show that the potential $\lambda\ind_{\Omega_1}(x)$ determines a one parameter family of relations, in this case graphs, in $H^{-\frac{1}{2}}(\Gamma_1)\times H^{\frac{1}{2}}(\Gamma_1)$. This could
be of independent interest and may allow for the application of other tools.
 We refer the reader to {\sc Grubb} \cite{Grubb:1968uq} for a thorough treatment of realizations of scalar elliptic differential
operators; and to {\sc Vishik} \cite{Vishik:1952kx} on which \cite{Grubb:1968uq} is based.

With some modifications, the ideas in this paper could be applied to large coupling problems for other $2^{nd}$ order equations and perhaps to similar large coupling problems on Riemannian manifolds\,---\,probably with more  substantial modifications in the latter case. The
method could  also be applied to the study of the resolvent \[R_\lambda(z) :=(A_\lambda - z)^{-1}=(-\lap+\lambda\ind_{_{\Omega_1}}(x)-z)^{-1},\] 
with $z\in \C$, as a prelude to studying the time dependent problems or developing a functional calculus in the large coupling limit.
Indeed one may view this paper as the study of $R_\lambda(0)$. We postpone these considerations to future papers.

\subsection{Other Work}
There are other approaches to the problem treated in this paper and we pause here to briefly review them. One standard
approach is to use asymptotic expansions and is due to {\sc Vishik--Lyusternik} \cite{Vishik:1960uo}. In that work, the authors consider 
much the same problem we do except they treat the situation where the exterior domain, which we have called $\Omega_2$, is unbounded. 
Using similar methods, {\sc Bruneau--Carbou} \cite{Bruneau:2002zr} have also obtained results on the asymptotic behavior of the eigenvalues
when the exterior domain is bounded.

There is a related approach using WKB expansions in dimension $1$ by {\sc Gesztesy et al} \cite[\S3]{Gesztesy:1988ua}. This paper treats more 
general potentials than we do and also obtain asymptotics for convergence rates as well as the spectral behavior in the 
large coupling limit. There is a rather robust formalism in {\sc BelHadjAli et al} \cite{BelHadjAli:2011ys}  capable of handling rather general Schr\"odinger type
operators perturbed by {measures}.

 The large coupling problem for the heat equation is treated in {\sc Demuth et al} \cite{M.Demuth:1995zr} using probabilistic methods and by the present author
 in \cite{Agbanusi:2013arx} using functional-analytic arguments. These papers treat the unbounded and bounded exterior domain cases respectively and we mention that in \cite{M.Demuth:1995zr} results on the stationary operators are derived from the time dependent operator by ``Laplace transform".

\section{Construction of $\sN$ and related operators}
\subsection{Notation and Preliminaries}
We first gather some notation to be used in addition to those employed in \S1.
Throughout, $\R^n$ is $n$--dimensional Euclidean space and a variable point will be written $x=(x',x_n)$ with $x'=(x_1,\ldots,x_{n-1})$. 
Elements of the dual space to $\R^n$ are written $\xi =(\xi',\xi_{n})$. We denote by $\R^n_+$ ($\R^n_-$) the half--space
defined by the relation $x_n>0$ ( $x_n<0$), and $\R^{n-1}$ the plane $x_n=0$.
\subsubsection{Sobolev Spaces}
As usual, $L^2(\R^n)$ is the space of equivalence class 
of measurable, square-integrable functions normed by
\[\norm{u}^2_{L^2(\R^n)} =\int\abs{u(x)}^2dx,\quad u \in L^2(\R^n).\]
We shall denote the Schwartz class of functions by $\cS(\R^n)$ or simply $\cS$, with its usual topology, and $\cS'$ its dual space of tempered
 distributions. Let $\mathcal{F}$ denote the Fourier transform $u\to\hat{u}$:
 \[\hat{u}(\xi)=\cF u(\xi) = \int e^{-ix\cdot\xi}u(x)\,dx,\quad u\in \cS.\]
 By the Plancherel theorem, $\cF$ can be extended to an isomorphism on $L^2$ and on $\cS'$ and Parseval's relation takes the form
 \[\norm{u}^2_{L^2(\R^n)} =(2\pi)^{-n}\norm{\hat{u}}^2_{L^2(\R^n)}.\]
 Let $\ang{\xi} = \paren{1+|\xi|^2}^{1/2}$ and  for $s\in\R$ we define the Sobolev spaces $H^s(\R^n)$ by
 \[H^s(\R^n):=\{u\in\cS':\ang{\xi}^s\hat{u}\in L^2(\R^n)\}.\]
 We denote the norm in $H^s(\R^n)$ by $\norm{\cdot}_s$ or $\norm{\cdot}_{s,\R^n}$ and identify $H^0$ with $L^2$. From 
 Plancherel's theorem, it follows that for $m\in\N$ we have
 \[H^m(\R^n):=\{u:D^\alpha u\in L^2(\R^n), \abs{\alpha}\leq m\},\]
 with the equivalent norm 
 \[\norm{u}_m^2 \simeq\sum_{\abs{\alpha}\leq m}\norm{D^\alpha u}^2.\]
 Here the derivatives are takes in the distribution sense and we employ standard multi--index notation:
 \[D^\alpha =(-i)^{|\alpha|}\partial^\alpha;\quad\partial^\alpha =\partial_{x_1}^{\alpha_1}\ldots\partial_{x_n}^{\alpha_n};\quad \alpha=(\alpha_1,\ldots,\alpha_n);\quad \abs{\alpha}=\sum_{j=1}^n\alpha_j.\]
 If $s=m+r$ with $m\in\N_0$ and $0<r<1$, one can show that the following defines an equivalent norm in $H^s(\R^n)$:
 \[\norm{u}_s^2\simeq \sum_{\abs{\alpha}\leq m}\norm{D^\alpha u}^2+ \sum_{\abs{\alpha}=m}\int\int|D^{\alpha} u(x) -D^{\alpha} u(y)|^2|x-y|^{-n-2r}\,dx\,dy.\]
 
 We can similarly define Sobolev spaces on $\R^n_\pm$. Briefly for $m\in\N$, $0<r<1$ and $s=m+r$
 \begin{align*}
 H^m(\R^n_{\pm})&=\{u\in L^2(\R^n_{\pm}): \sum_{\abs{\alpha}\leq m}\norm{D^\alpha u}^2_{L^2(\R^n_{\pm})}<\infty\},\\
 H^{s}(\R^n_{\pm})&=\{u\in H^m(\R^n_{\pm}):[D^{\alpha}u]_{r,\R^n_{\pm}}<\infty; \abs{\alpha}= m\},
 \end{align*}
 where we have defined the semi--norms
 \[[D^{\alpha}u]^2_{r,\R^n_{\pm}}=\int_{\R^n_{\pm}}\int_{\R^n_{\pm}}|D^{\alpha} u(x) -D^{\alpha} u(y)|^2|x-y|^{-n-2r}\,dx\,dy.\]
 We denote partial Fourier transforms by
 \[\tilde{u}(\xi',x_n)=\cF' u(\xi',x_n) = \int e^{-ix'\cdot\xi'}u(x)\,dx',\]
 and we consistently use primes to denote tangential variables or operators, i.e. variables or operators in $\R^{n-1}$. For example,
 $D'$ or $D_{x'}$ denotes differentiation in the $x'$ variables only, and so on.  We will need following theorem:
 \begin{theorem}[Trace Theorem]\label{thm:trace_thm}
For $u\in\cS$  and nonnegative integers $j$, the maps \[\gamma_{_j}u:=\at{(\partial^j_{x_n}u)}{x_n=0}\] satisfy
\[ \cF'\brac{\gamma_{_j}u}(\xi')=\int(i\xi_n)^j\hat{u}(\xi',\xi_n)\,d\xi_n.\]
Moreover, for $s>j+1/2$ and a constant depending only on $s$
\[\norm{\gamma_{_j}u}_{s-j-\frac{1}{2},\R^{n-1}}\leq C\norm{u}_{s},\]
and $\gamma_{_j}$ extends to a bounded surjection $\gamma_{_j}:H^s(\R^n)\to H^{s-j-\frac{1}{2}}(\R^{n-1})$.
 \end{theorem}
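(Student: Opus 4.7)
The plan is to argue in three stages, working first with Schwartz functions and then passing to $H^s$ by density, and finally constructing a right inverse for surjectivity.

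\emph{Step 1: The pointwise formula on $\cS$.} For $u\in\cS(\R^n)$, start from Fourier inversion $u(x)=(2\pi)^{-n}\int e^{ix\cdot\xi}\hat u(\xi)\,d\xi$, differentiate $j$ times in $x_n$ under the integral (legal since $\hat u\in\cS$), evaluate at $x_n=0$, and recognize what is left as an inverse partial Fourier transform in $x'$. This gives the stated identity (up to a $(2\pi)^{-1}$ that may be absorbed in my normalization convention).

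\emph{Step 2: The trace estimate.} Using the formula from Step 1 and the Cauchy–Schwarz inequality with the splitting $|\xi_n|^j=|\xi_n|^j\ang{\xi}^{-s}\cdot\ang{\xi}^{s}$, I get
\[
\abs{\cF'[\gamma_{_j}u](\xi')}^{2}\leq \Bigl(\int|\xi_n|^{2j}\ang{\xi}^{-2s}d\xi_n\Bigr)\Bigl(\int\ang{\xi}^{2s}\abs{\hat u(\xi',\xi_n)}^{2}d\xi_n\Bigr).
\]
The first factor is computed by the substitution $\xi_n=\ang{\xi'}t$, which yields
\[
\int|\xi_n|^{2j}\ang{\xi}^{-2s}d\xi_n = \ang{\xi'}^{2j-2s+1}\int t^{2j}(1+t^2)^{-s}dt,
\]
and the last integral is finite \emph{precisely} when $2s-2j>1$, i.e.\ $s>j+\tfrac{1}{2}$; call it $C_s$. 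Multiplying through by $\ang{\xi'}^{2(s-j-\frac{1}{2})}$ cancels the $\ang{\xi'}$ power and integrating in $\xi'$ yields $\norm{\gamma_{_j}u}_{s-j-\frac{1}{2},\R^{n-1}}^{2}\leq C_s\norm{u}_s^{2}$, via the definition of the $H^s$ norm in terms of $\hat u$. Density of $\cS$ in $H^s(\R^n)$ then extends $\gamma_{_j}$ uniquely to a bounded operator $H^s(\R^n)\to H^{s-j-\frac{1}{2}}(\R^{n-1})$.

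\emph{Step 3: Surjectivity via a right inverse.} Given $v\in H^{s-j-\frac{1}{2}}(\R^{n-1})$, I produce $u\in H^s(\R^n)$ with $\gamma_{_j}u=v$ by an explicit Fourier-multiplier lift. A convenient choice is to fix $\phi\in\cS(\R)$ with $\phi^{(j)}(0)=1$ and $\phi^{(k)}(0)=0$ for all $k\neq j$ (e.g.\ take $\phi(t)=t^j e^{-t^2}/j!$ suitably normalized and modified), and define $u$ through its partial Fourier transform by
\[
\til u(\xi',x_n)=\ang{\xi'}^{-j}\phi\bigl(\ang{\xi'}x_n\bigr)\hat v(\xi').
\]
A direct differentiation at $x_n=0$ shows $\cF'[\gamma_{_j}u](\xi')=\hat v(\xi')$, hence $\gamma_{_j}u=v$. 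The $H^s$ estimate for $u$ is verified by Plancherel in $x_n$: taking full Fourier transform and making the change of variables $\xi_n=\ang{\xi'}\tau$ reduces $\norm{u}_s^{2}$ to an integral of the form $\int\ang{\xi'}^{2(s-j-\frac{1}{2})}\abs{\hat v(\xi')}^{2}\bigl(\int(1+\tau^2)^{s}\abs{\hat\phi(\tau)}^{2}d\tau\bigr)d\xi'$, with the inner $\tau$-integral finite since $\hat\phi\in\cS$. This gives $\norm{u}_s\leq C\norm{v}_{s-j-\frac{1}{2}}$ and proves surjectivity (indeed, continuous right-invertibility).

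The main technical nuisance is Step 3: choosing $\phi$ so that exactly the $j$-th trace survives while the norm estimate for the lifted $u$ reduces cleanly under the $\xi_n=\ang{\xi'}\tau$ scaling. Step 2 is where the borderline $s=j+\tfrac{1}{2}$ appears as the convergence threshold of an elementary one-dimensional integral, which is the conceptual heart of the theorem.
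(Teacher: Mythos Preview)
The paper does not actually prove this theorem: it is stated as a standard result and the reader is referred to \cite{Chazarain:1982fk} and \cite{Adams:1975fk} for details. Your proof is the standard one and is correct; the three steps (Fourier inversion formula, Cauchy--Schwarz with the scaling $\xi_n=\ang{\xi'}t$, and an explicit extension operator) are exactly the argument one finds in those references.

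One small imprecision in Step~3: you ask for $\phi\in\cS(\R)$ with $\phi^{(k)}(0)=0$ for \emph{all} $k\neq j$, but your example $\phi(t)=t^{j}e^{-t^2}/j!$ only kills the derivatives of order $<j$, not those of order $>j$. This does not matter for the argument, since surjectivity of $\gamma_{_j}$ alone requires only $\phi^{(j)}(0)=1$; the stronger vanishing condition is relevant only if you want a simultaneous right inverse for the whole string of traces $(\gamma_{_0},\ldots,\gamma_{_j})$. Either drop the extra condition or note that such a $\phi$ exists by Borel's lemma.
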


The fractional Sobolev spaces on the boundary are defined via local charts and a partition of unity from the
similar spaces defined in $R^{n-1}$. Using this we can extend the definition so that the trace operators 
$\gamma_{_j}v =\at{(\partial^j_{\nu}v)}{\Gamma_1}$ define bounded surjections from $H^s(\Omega_i)$ to 
$H^{s-j-\frac{1}{2}}(\Gamma_1)$ for $s>j+\frac{1}{2}$. Note that we use the same notation to define taking the traces from
``either side" of $ \Gamma_1$. We refer the reader to \cite[Chap$.$ 2]{Chazarain:1982fk} or {\sc Adams} \cite{Adams:1975fk} for more details.

\subsection{Symbols and Pseudo-differential operators}\label{subsec:symbl_calc}
For real $m$ and $k$ a non-negative integer, the function $a(x,\xi)$ belongs to the symbol class $S^m_k(\R^{2n})$ if $a\in C^{\infty}(\R^n\times\R^n)$ and
for any multi-indices $\alpha$ and $\beta$ there is are positive constants $C_{\alpha\beta}$ such that 
\[|\partial^\beta_x\partial^\alpha_{\xi}a(x,\xi)|\leq C_{\alpha\beta} (1+|\xi|)^{m-|\alpha|}; \quad \abs{\alpha}\leq k.\]
If $k\geq j$ then $S^m_k \subseteq S^m_j$ and the usual symbol class,  $S^m$, is characterized by $S^m = \bigcap_{k=0}^\infty S^m_k$. 
Important for us is a class of parameter dependent symbols which we now define. The symbol class $P^{m}_k(\R^n\times\R^n)$ consists
of  $b\in C^{\infty}(\R^n\times \R^n\times\R_+)$ for which there exists a $\lambda_0>0$ such that for $\lambda\geq\lambda_0$,
\[|\partial^\beta_x\partial^\alpha_{\xi}b(x,\xi,\lambda)|\leq C_{\alpha\beta} (|\xi|+\lambda^{\frac{1}{2}})^{m-|\alpha|};\quad \abs{\alpha}\leq k.\]
Since we aim to work locally we shall always assume that the symbols are either compactly supported in $x$ or else do not depend on $x$ outside some ball (which may depend on the symbol).

Associated to a symbol in either symbol class is a pseudo-differential operator defined by
\[a(x,D)u =\op(a)u(x) =(2\pi)^{-n}\int e^{ix\cdot\xi}a(x,\xi)\hat{u}(\xi)\,d\xi.\]
Basic facts about these operators, at least for symbols in $S^m$, can be found in \cite[Chap$.$ 4]{Chazarain:1982fk}. The composition rule is 
pertinent for our purposes and we recall that the rule relies on the observation that products of symbols of the same type are also symbols. That is,
if $a_i\in S^{m_i}$ and $b_i\in P^{r_i}$, for $i=1,2$, then $a_1a_2$ and $b_1b_2$ belong to $S^{m_1+m_2}$ and $P^{r_1+r_2}$ respectively. Later, we will need to compose
operators with symbols of \emph{different types}\,---\,one with a parameter and one without. We include some results in this direction as we have not 
found the exact statements we need in the existing literature. Nevertheless, some related results can be found in {\sc Agranovich} \cite{Agranovich:1971fk} and 
{\sc Grubb} \cite{Grubb:1978uq}.
 
Using Leibniz's rule and the inequality
\begin{equation}\label{eqn:param_and_non_param_symbl_ineq}
 (1+\abs{\xi})\leq (\abs{\tau}+\abs{\xi})\leq(1+\abs{\tau})(1+\abs{\xi});\quad \tau\in \C;\quad\abs{\tau}\geq 1,
\end{equation}
 we see that for $|\alpha|\leq m_1$ 
\begin{align*}
|\partial^\beta_x\partial^\alpha_{\xi}[a(x,\xi)b(x,\xi,\lambda)]|&\leq\sum_{\substack{\gamma\leq\alpha\\\mu\leq\beta}}C_{\gamma,\mu}|\partial^\mu_x\partial^\gamma_{\xi}a(x,\xi)\partial^{\beta-\mu}_x\partial^{\alpha-\gamma}_{\xi}b(x,\xi,\lambda)| \\
&\leq\sum_{\substack{\gamma\leq\alpha\\\mu\leq\beta}}\til{C}_{\gamma,\mu}(1+|\xi|)^{m_1-|\gamma|}(|\xi|+\lambda^{\frac{1}{2}})^{m_2+|\gamma|-|\alpha|} \\
&\leq C(|\xi|+\lambda^{\frac{1}{2}})^{m_1+m_2-|\alpha|}.
\end{align*}
Close examination of the computation above shows we have established
\begin{lemma}\label{lem:prod_symbl}
Suppose that $a\in S^{m_1}$ and $b\in  P^{m_2}$. If $m_1\geq0$ then $ab\in P^{m_1+m_2}_{[m_1]}$ and if $m_2\leq 0$ then $ab\in S^{m_1+m_2}$.
\end{lemma}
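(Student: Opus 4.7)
The plan is to derive both conclusions from a single application of Leibniz's rule combined with the two-sided inequality \eqref{eqn:param_and_non_param_symbl_ineq}. Expanding derivatives of the product and using the hypotheses $a\in S^{m_1}$ and $b\in P^{m_2}$, each summand $\paren{\partial^\mu_x\partial^\gamma_\xi a}\paren{\partial^{\beta-\mu}_x\partial^{\alpha-\gamma}_\xi b}$ with $\gamma\leq\alpha$ and $\mu\leq\beta$ is bounded by a constant multiple of
\[\paren{1+\abs{\xi}}^{m_1-\abs{\gamma}}\paren{\abs{\xi}+\lambda^{1/2}}^{m_2-\abs{\alpha}+\abs{\gamma}}.\]
The whole task is then to convert one of these two factors to match the type of the other, which requires controlling the sign of the corresponding exponent.

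For this, I would take $\lambda_0\geq 1$ so that $\lambda^{1/2}\geq 1$, whence \eqref{eqn:param_and_non_param_symbl_ineq} yields $\paren{1+\abs{\xi}}^{r}\leq \paren{\abs{\xi}+\lambda^{1/2}}^{r}$ when $r\geq 0$ and the reverse inequality when $r\leq 0$. In the first case ($m_1\geq 0$), I restrict to $\abs{\alpha}\leq [m_1]$; this forces $m_1-\abs{\gamma}\geq 0$ for every $\gamma\leq\alpha$, so the $a$-factor bound can be raised to $\paren{\abs{\xi}+\lambda^{1/2}}^{m_1-\abs{\gamma}}$, and multiplication with the $b$-factor bound yields $\paren{\abs{\xi}+\lambda^{1/2}}^{m_1+m_2-\abs{\alpha}}$, which is exactly the $P^{m_1+m_2}_{[m_1]}$ estimate. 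In the second case ($m_2\leq 0$), the exponent $m_2-\abs{\alpha}+\abs{\gamma}$ is already $\leq m_2\leq 0$ for every $\gamma\leq\alpha$ and every $\alpha$, so the reverse inequality applied to the $b$-factor gives $\paren{1+\abs{\xi}}^{m_2-\abs{\alpha}+\abs{\gamma}}$; combining with the $a$-factor produces $\paren{1+\abs{\xi}}^{m_1+m_2-\abs{\alpha}}$ with no restriction on $\alpha$, which is exactly $ab\in S^{m_1+m_2}$.

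The main delicacy is bookkeeping rather than obstacle: one must match the direction of \eqref{eqn:param_and_non_param_symbl_ineq} to the sign of the exponent being converted, and recognize that the index cutoff $\abs{\alpha}\leq [m_1]$ appearing in $P^{m_1+m_2}_{[m_1]}$ is precisely what is needed to make $m_1-\abs{\gamma}\geq 0$ in the first case, while no such cutoff is required in the second case since $m_2-\abs{\alpha}+\abs{\gamma}\leq 0$ holds automatically. Smoothness of $ab$ and the local $x$-support conditions are inherited directly from those of $a$ and $b$.
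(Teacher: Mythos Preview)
Your proposal is correct and follows essentially the same approach as the paper: Leibniz's rule followed by the two-sided inequality \eqref{eqn:param_and_non_param_symbl_ineq}, with the sign of the relevant exponent dictating which direction of the inequality to use. In fact you spell out the second case ($m_2\leq 0$) more explicitly than the paper does, which only displays the computation for the first case and leaves the second to ``close examination.''
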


The next result describes the action of pseudo-differential operators with parameter dependent symbols. In the statements and proofs we replace $\lambda^{\frac{1}{2}}$ with $\tau$ to make 
the formulae less unwieldy.
\begin{proposition}\label{prop:map_neg_par_psdo}
Suppose that $b\in P_0^m$ with $m\leq0$. Let $r\in\R$ and $r+m\leq s\leq r$. Then $\op(b)$ can be extended to a bounded operator from $H^r(\R^n)$ to $H^{s-m}(\R^n)$ and we have
\begin{equation}
\bnorm{\op(b)u}_{s-m}\leq\frac{C}{\tau^{r-s}}\bnorm{u}_{r},
\end{equation}
for some constant independent of $u$ and $\tau$. In particular, \[\bnorm{\op(b)u}_{r}\leq\frac{C}{\tau^{|m|}}\bnorm{u}_{r}.\]
\end{proposition}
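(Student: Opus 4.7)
My plan is to rewrite the estimate as a single $L^2\to L^2$ operator bound and then to extract the $\tau^{-(r-s)}$ decay from a pointwise symbol inequality. Introduce the Bessel potentials $\Lambda^t:=\op(\ang{\xi}^t)$, which are isometric isomorphisms $H^\sigma\to H^{\sigma-t}$ by Plancherel, and set $v=\Lambda^r u$, so $\bnorm{v}_{L^2}=\bnorm{u}_r$ and
\[\bnorm{\op(b)u}_{s-m}=\bnorm{\Lambda^{s-m}\,\op(b)\,\Lambda^{-r}v}_{L^2}.\]
The proposition is thus equivalent to the $L^2$-operator bound $\bnorm{\Lambda^{s-m}\op(b)\Lambda^{-r}}_{L^2\to L^2}\le C\tau^{s-r}$.

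The right composition $\op(b)\Lambda^{-r}$ is exactly the $\Psi$DO with symbol $b(x,\xi,\tau)\ang{\xi}^{-r}$, because $\Lambda^{-r}$ is a Fourier multiplier; composing further on the left with $\Lambda^{s-m}$ through the parameter-dependent calculus yields the leading symbol $\ang{\xi}^{s-m-r}b(x,\xi,\tau)$, plus an asymptotic series of commutator terms whose symbols are bounded by $\ang{\xi}^{s-m-r-|\alpha|}(|\xi|+\tau)^m$. The pivotal elementary inequality is
\[\ang{\xi}^{s-m-r}(|\xi|+\tau)^m\le C\,\tau^{s-r},\qquad m\le 0,\ r+m\le s\le r,\]
which I would verify by splitting $\xi$-space. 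On $\{|\xi|\le\tau\}$: $(|\xi|+\tau)^m\sim\tau^m$ since $m\le 0$, and $\ang{\xi}^{s-m-r}\lesssim\tau^{s-m-r}$ since $s-m-r\ge 0$ and $\ang{\xi}\lesssim\tau$, giving a product $\lesssim\tau^{s-r}$. On $\{|\xi|\ge\tau\}$: $(|\xi|+\tau)^m\sim|\xi|^m$, and $|\xi|^m\ang{\xi}^{s-m-r}\sim\ang{\xi}^{s-r}\le\tau^{s-r}$ since $s-r\le 0$ and $\ang{\xi}\ge\tau$. The identical argument applied to each commutator symbol, which carries an extra factor of $\ang{\xi}^{-|\alpha|}$, produces strictly better decay, so the entire formal expansion enjoys the uniform bound $\lesssim\tau^{s-r}$.

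With the rescaled symbol $\tau^{r-s}\ang{\xi}^{s-m-r}b$ uniformly bounded in $(x,\xi,\tau)$ together with all its $x$-derivatives (inherited directly from the $P_0^m$ hypothesis), the final step is a standard $L^2$-boundedness assertion. For the Fourier-multiplier part this is immediate from Plancherel; for the $x$-dependent part one appeals to the standing compact $x$-support of our symbols together with a Calder\'on--Vaillancourt-type argument, using the $C^\infty$-regularity of $b$ to integrate by parts in $\xi$ and obtain off-diagonal decay of the Schwartz kernel sufficient for Schur's test on $L^2$.

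The main obstacle will be precisely this last $L^2$-boundedness step. The class $P_0^m$ records no control on $\partial_\xi^\alpha b$ for $|\alpha|\ge 1$, so Calder\'on--Vaillancourt does not apply verbatim and the asymptotic expansion from the left composition is formal rather than norm-convergent. What rescues the argument is that $b$ is genuinely $C^\infty$ and has compact $x$-support, so one may truncate the expansion at any finite order and estimate the remainder by kernel methods. Tracking this truncation while preserving the $\tau^{s-r}$ decay through each step is the technical heart of the proof.
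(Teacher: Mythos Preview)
Your reduction via Bessel potentials to an $L^2\to L^2$ bound on $\Lambda^{s-m}\op(b)\Lambda^{-r}$ is correct and is exactly what the paper does implicitly: it writes $\widehat{\op(b)u}(\xi)=\int\hat b(\xi-\sigma,\sigma,\tau)\hat u(\sigma)\,d\sigma$ and then studies the Fourier--side kernel $G(\xi,\sigma)=\ang{\xi}^{s-m}\hat b(\xi-\sigma,\sigma,\tau)\ang{\sigma}^{-r}$. Your pointwise inequality $\ang{\xi}^{s-m-r}(|\xi|+\tau)^m\le C\tau^{s-r}$ is likewise the heart of the matter and is equivalent to the paper's Peetre estimate.

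The genuine gap is in your final step. You propose to obtain $L^2$--boundedness by a Calder\'on--Vaillancourt--type argument, ``integrating by parts in $\xi$'' to get off--diagonal decay of the Schwartz kernel. But the hypothesis $b\in P_0^m$ records \emph{no} $\tau$--uniform bound on $\partial_\xi^\alpha b$ for $|\alpha|\ge 1$; the $C^\infty$--regularity of $b$ alone gives existence of these derivatives, not a uniform size estimate as $\tau\to\infty$. So neither Calder\'on--Vaillancourt nor spatial kernel decay via $\xi$--integration by parts yields the required uniform bound, and the same objection kills any attempt to control the remainder in your left--composition expansion by these means. This is not a technicality you can patch by truncating the expansion: the missing $\xi$--derivative control is exactly what the class $P_0^m$ withholds.

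The paper's fix is to integrate by parts in $x$ instead. The compact $x$--support together with the $P_0^m$ bound on $\partial_x^\beta b$ (which \emph{is} part of the hypothesis, for all $\beta$) gives the Paley--Wiener type estimate
\[
|\hat b(\omega,\sigma,\tau)|\le C_N(|\sigma|+\tau)^m(1+|\omega|)^{-N},
\]
uniformly in $\tau$. One then runs Schur's test directly on the Fourier--side kernel $G(\xi,\sigma)$, using Peetre's inequality $\ang{\xi}^{s-m}\ang{\sigma}^{m-s}\le\ang{\xi-\sigma}^{|s-m|}$ together with your pointwise bound to obtain $\int|G|\,d\sigma,\ \int|G|\,d\xi\le C\tau^{s-r}$. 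This avoids the symbolic left--composition entirely and uses only the data that $P_0^m$ actually provides. If you rewrite your argument with the integration by parts in $x$ rather than $\xi$, and work with $G$ on the Fourier side rather than the spatial Schwartz kernel, it collapses to the paper's proof.
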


\begin{proof}
This is a variant of the proof of the boundedness of ``classical" pseudo-differential operators. Throughout we take $u\in\mathcal{S}$ to avoid any convergence issues and justify switching the order of integration. Since $b$ vanishes for $x$ outside some compact set,
it follows that
\begin{align*}
\abs{\sigma^\beta\int e^{-ix\cdot\sigma}b(x,\xi,\tau)\,dx}& =\abs{\int e^{-ix\cdot\sigma}D_x^\beta b\,dx} \leq C(|\xi|+\tau)^{m}\int\limits_{\supt(b)}dx
\end{align*}
which implies the Paley-Wiener type estimate for the Fourier transform of $b$: for any integer $N>0$
\[\hat{b}(\sigma,\xi,\tau)\leq C_N (|\xi|+\tau)^{m} (1+|\sigma|)^{-N}.\]
Writing $Bu$ for $\op(b)u$ we see that 
\[\widehat{Bu}(\xi) = \int \hat{b}(\xi-\sigma,\sigma,\tau)\hat{u}(\sigma)\,d\sigma,\]
and thus
\begin{align*}
\norm{Bu}^2_{s-m} =\int \ang{\xi}^{s-m}|\widehat{Bu}(\xi)|^2\,d\xi&=\int\abs{\int\ang{\xi}^{s-m} \hat{b}(\xi-\sigma,\sigma,\tau)\hat{u}(\sigma)\,d\sigma}^2\,d\xi\\
&:=\bnorm{\int G(\xi,\sigma)f(\sigma)\,d\sigma}_{L^2(\R^n_{\xi})},
\end{align*}
where $f(\sigma) =\ang{\sigma}^r\hat{u}(\sigma)$ and $G(\xi,\sigma)=\ang{\xi}^{s-m} \hat{b}(\xi-\sigma,\sigma,\tau)\ang{\sigma}^{-r}$. Using Peetre's inequality: 
$(1+|\xi|)^{t}(1+|\sigma|)^{-t}\leq (1+|\xi-\sigma|)^{|t|}$
we see that 
\begin{align*}
|G(\xi,\sigma)| &\leq C_N (|\sigma|+\tau)^{m} (1+|\xi-\sigma|)^{-N}(1+|\xi|)^{s-m}(1+|\sigma|)^{-r}\\
&\leq C_N(1+|\xi-\sigma|)^{-N+|s-m|} (|\sigma|+\tau)^{s-r}\paren{\frac{1+|\sigma|}{|\sigma|+\tau}}^{|m|+s-r}.
\end{align*}
Since $r+m\leq s\leq r$, choosing $N$ sufficiently large we see
\[\int |G(\xi,\sigma)|\,d\sigma \leq \frac{C}{\tau^{r-s}},\quad\text{and that}\quad \int |G(\xi,\sigma)|\,d\xi \leq \frac{C}{\tau^{r-s}}.\]
The result now follows by H\"older's inequality.
\end{proof}
The next result can be viewed as a consequence of the above proof or, more directly, of inequality \eqref{eqn:param_and_non_param_symbl_ineq}.
\begin{corollary}
Let $\Psi^{m}$ be the collection of pseudo-differential operators of order $m$. If $b\in P^m$ with $m\leq0$,
then $\op(b)\in\Psi^{m}$. In other words $\op(P^m)\subset\op(S^m)$ for $m\leq0$.
\end{corollary}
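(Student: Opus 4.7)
The plan is to argue directly that for each fixed $\tau=\lambda^{1/2}\geq 1$, a symbol $b\in P^m$ with $m\leq 0$ actually satisfies the $S^m$ symbol estimates in $(x,\xi)$, with constants independent of $\tau$. Once this is in place, the conclusion $\op(b)\in\Psi^m$ is immediate from the definitions.

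The whole argument rests on the left half of inequality \eqref{eqn:param_and_non_param_symbl_ineq}, namely $1+|\xi|\leq |\xi|+\tau$ for $|\tau|\geq 1$. Starting from the defining $P^m$ bound
\[
\bigl|\partial^\beta_x\partial^\alpha_{\xi}b(x,\xi,\tau)\bigr|\leq C_{\alpha\beta}(|\xi|+\tau)^{m-|\alpha|},
\]
I would observe that for every multi-index $\alpha$ one has $m-|\alpha|\leq m\leq 0$, so raising the inequality $1+|\xi|\leq|\xi|+\tau$ to this non-positive power reverses it:
\[
(|\xi|+\tau)^{m-|\alpha|}\leq (1+|\xi|)^{m-|\alpha|}.
\]
Combining the two displays gives exactly the standard symbol bound
\[
\bigl|\partial^\beta_x\partial^\alpha_{\xi}b(x,\xi,\tau)\bigr|\leq C_{\alpha\beta}(1+|\xi|)^{m-|\alpha|},
\]
with the same constants $C_{\alpha\beta}$, in particular independent of $\tau$. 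Hence for each such $\tau$ the function $b(\,\cdot\,,\,\cdot\,,\tau)$ lies in $S^m$, and $\op(b)$ is a classical $\Psi$DO of order $m$; since the bounds are uniform in $\tau\geq 1$, the inclusion $\op(P^m)\subset\op(S^m)$ holds for the whole one-parameter family.

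There is essentially no substantive obstacle here: the hypothesis $m\leq 0$ is precisely what is needed to flip the inequality and convert the $(|\xi|+\tau)$-type estimate into the ordinary $(1+|\xi|)$-type estimate, and everything else is bookkeeping. As the statement of the corollary suggests, one could alternatively deduce the result from the bounds established in the proof of Proposition \ref{prop:map_neg_par_psdo}, specialized to $s=r$, which already captures the $L^2$-boundedness and mapping properties expected of a $\Psi$DO of order $m$; but the symbol-level computation above is the most transparent route.
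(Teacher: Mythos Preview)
Your argument is correct and is exactly the route the paper indicates: the corollary is stated there as a direct consequence of inequality \eqref{eqn:param_and_non_param_symbl_ineq}, and what you have written simply supplies the one-line computation (raising $1+|\xi|\leq|\xi|+\tau$ to the non-positive power $m-|\alpha|$) that the paper leaves implicit. Your remark about the alternative via Proposition \ref{prop:map_neg_par_psdo} also matches the paper's comment that the result ``can be viewed as a consequence of the above proof.''
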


Another simple consequence is
\begin{corollary}\label{cor:crude_comp}
Suppose that $a\in S^{m_1}$ and $b\in  P^{m_2}$ with $m_2\leq0$. Then
\[\bnorm{\op(a)\circ\op(b)u}_{r-m_1}\leq\frac{C}{\tau^{|m_2|}}\bnorm{u}_{r}.\]
\end{corollary}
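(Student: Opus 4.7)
The plan is to prove this corollary by a one-line composition of two boundedness statements. First, apply the "in particular" conclusion of Proposition \ref{prop:map_neg_par_psdo} to $\op(b)$ with $m=m_2\leq 0$. This gives
\[
\bnorm{\op(b)u}_{r}\leq\frac{C}{\tau^{|m_2|}}\bnorm{u}_{r},
\]
so that $\op(b):H^r(\R^n)\to H^r(\R^n)$ is bounded with the desired $\tau$-decay.

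Second, invoke the classical $H^s$-boundedness of pseudo-differential operators with symbols in $S^{m_1}$ (a standard result; see \cite[Chap$.$ 4]{Chazarain:1982fk}): for any $r\in\R$, the operator $\op(a):H^r(\R^n)\to H^{r-m_1}(\R^n)$ is bounded, i.e.,
\[
\bnorm{\op(a)v}_{r-m_1}\leq C\bnorm{v}_{r}.
\]
Setting $v:=\op(b)u$ and chaining the two inequalities yields
\[
\bnorm{\op(a)\circ\op(b)u}_{r-m_1}\leq C\bnorm{\op(b)u}_{r}\leq\frac{C'}{\tau^{|m_2|}}\bnorm{u}_{r},
\]
which is the claim.

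There is no serious obstacle; the only point to verify is that the "in particular" case of the proposition applies at the endpoint $s=r+m_2$ permitted by the hypothesis $r+m\leq s\leq r$ (valid since $m_2\leq 0$). If one prefers the alternative route alluded to after the statement — namely, arguing "more directly" from inequality \eqref{eqn:param_and_non_param_symbl_ineq} — one would instead write down the (asymptotic) composition symbol of $\op(a)\circ\op(b)$, use Lemma \ref{lem:prod_symbl} to place its leading term in $P^{m_1+m_2}_{[m_1]}$, and then rerun the Paley--Wiener/Peetre-inequality computation from the proof of Proposition \ref{prop:map_neg_par_psdo} with $m=m_1+m_2$ and the appropriate choice of $s$. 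The composition argument above is cleaner and requires nothing beyond results already in hand.
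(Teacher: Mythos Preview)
Your proof is correct and is precisely the argument the paper has in mind: the corollary is stated without proof as ``another simple consequence'' of Proposition~\ref{prop:map_neg_par_psdo}, and the intended reasoning is exactly the composition of the ``in particular'' bound $\bnorm{\op(b)u}_{r}\leq C\tau^{-|m_2|}\bnorm{u}_{r}$ with the standard $H^r\to H^{r-m_1}$ boundedness of $\op(a)$.
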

When $m_1$ is also negative in Corollary \ref{cor:crude_comp}, $\op(a)\circ\op(b)$ is smoothing and has a small norm for  large $\tau$.
The next result refines Corollary \ref{cor:crude_comp} in certain respects

\begin{proposition}\label{prop:crude_symbl_calc}
 Let $a\in S^{m_1}$ and $b\in  P^{m_2}$ with $m_1>0$ and $m_1+m_2\leq0$. Then $\op(a)\circ\op(b)$ is a pseudo-differential operator
with symbol in $P^{m_1+m_2}_{[m_2]}$. In particular for $r\in\R$ and with  $t=r+1-m_1+[m_1]$
\begin{equation}
 \norm{\op(a)\circ\op(b)-\op({\sum_{|\alpha|=0}^{[m_1]}\frac{1}{\alpha!}\partial^\alpha_\xi a(x,\sigma)D^\alpha_x b(x,\sigma,\tau))}}_{t}\leq\frac{c}{\tau^{|m_2|}}\norm{u}_r
\end{equation}
\end{proposition}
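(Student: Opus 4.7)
\emph{Proof proposal.} The strategy is the standard composition argument from pseudo-differential calculus adapted to the parameter-dependent setting: expand $a(x,\cdot)$ by Taylor's formula at $\sigma$, convert the resulting powers of $(\xi-\sigma)$ into $x$-derivatives of $b$ via the Fourier transform identity, and then estimate the Taylor remainder using Proposition \ref{prop:map_neg_par_psdo}.

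First I would rewrite
\[\op(a)\op(b)u(x)=(2\pi)^{-n}\int\!\!\int e^{ix\cdot\xi}a(x,\xi)\,\hat b(\xi-\sigma,\sigma,\tau)\,\hat u(\sigma)\,d\sigma\,d\xi,\]
where $\hat b(\cdot,\sigma,\tau)$ is the Fourier transform of $b$ in its first (spatial) slot. Setting $\xi=\sigma+\eta$ and applying Taylor's formula of order $N=[m_1]+1$ in $\xi$ gives
\[a(x,\sigma+\eta)=\sum_{|\alpha|<N}\frac{\eta^\alpha}{\alpha!}\partial^\alpha_\xi a(x,\sigma)+R_N(x,\sigma,\eta).\]
Because $\eta^\alpha\hat b(\eta,\sigma,\tau)=\widehat{D_y^\alpha b}(\eta,\sigma,\tau)$, the polynomial part reassembles, after an inverse Fourier transform in $\eta$, into the pseudo-differential operator with symbol $\sum_{|\alpha|\le [m_1]}\tfrac{1}{\alpha!}\partial^\alpha_\xi a(x,\sigma)D^\alpha_x b(x,\sigma,\tau)$ displayed in the statement; Lemma \ref{lem:prod_symbl} places this symbol in the parameter-dependent class asserted in the first part of the proposition.

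The bulk of the work lies in the remainder. Writing
\[R_N(x,\sigma,\eta)=\sum_{|\alpha|=N}\frac{N\,\eta^\alpha}{\alpha!}\int_0^1(1-t)^{N-1}\partial^\alpha_\xi a(x,\sigma+t\eta)\,dt\]
and substituting back into the oscillatory integral produces an amplitude operator built from $\partial^\alpha_\xi a(x,\sigma+t\eta)\,D^\alpha_y b(y,\sigma,\tau)$ against the phase $e^{i(x-y)\cdot\eta}$. Integrating by parts in $y$ against $(1+|\eta|^2)^{-M}$ generates the $\eta$-decay needed for absolute convergence; Peetre's inequality lets me trade $(1+|\sigma+t\eta|)^{m_1-N}$ (note $m_1-N<0$) for $(1+|\sigma|)^{m_1-N}(1+|\eta|)^{N-m_1}$; and \eqref{eqn:param_and_non_param_symbl_ineq} absorbs the surviving $(1+|\sigma|)$ weights into $(\tau+|\sigma|)$. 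The resulting remainder symbol lies in $P^{m_1+m_2-N}$, an order strictly below $0$ since $m_1+m_2-N\le -1$. Applying Proposition \ref{prop:map_neg_par_psdo} with $m=m_1+m_2-[m_1]-1$ and $s=r+m_2$, one checks $r+m\le s\le r$ (using $m_1-[m_1]\in[0,1)$ and $m_2\le 0$) and computes $s-m=t$, delivering the advertised bound $c\tau^{-|m_2|}\norm{u}_r$.

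The principal obstacle is tracking the weights \emph{uniformly} in $\tau$ through the remainder estimate: one must verify the full $P^{m_1+m_2-N}$ symbol bounds rather than merely an $L^2$ bound, and this requires carefully combining the Taylor integral, the $y$-integration by parts, Peetre's inequality, and \eqref{eqn:param_and_non_param_symbl_ineq} to keep the $\tau$-dependence explicit. Once these symbol estimates are in hand, both the claim that $\op(a)\circ\op(b)$ has symbol in $P^{m_1+m_2}_{[m_2]}$ and the quantitative bound on the truncation error follow by a direct invocation of Proposition \ref{prop:map_neg_par_psdo}.
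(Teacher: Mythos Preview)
Your proposal is correct and follows essentially the same route as the paper: write the composition as an oscillatory integral in the offset variable $\eta=\xi-\sigma$, Taylor-expand $a(x,\sigma+\eta)$, identify the polynomial part with the displayed finite sum, and bound the remainder via Proposition~\ref{prop:map_neg_par_psdo}. The only cosmetic differences are that the paper obtains the needed $\eta$-decay from the Paley--Wiener estimate on $\hat b$ (exploiting the standing compact-support-in-$x$ hypothesis) rather than from your amplitude integration-by-parts, and it first verifies the $P^{m_1+m_2}$ bounds on the full symbol $c(x,\sigma,\tau)=\int e^{ix\cdot\omega}a(x,\omega+\sigma)\hat b(\omega,\sigma,\tau)\,d\omega$ directly before Taylor expanding, whereas you assemble the symbol class from the expansion pieces.
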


\begin{proof}
Putting $A=\op(a)$ and $B=\op(b)$, we see
 \begin{align*}
ABu(x) &= \int\int e^{i x\cdot\xi}a(x,\xi)\hat{b}(\xi-\sigma,\sigma,\tau)\hat{u}(\sigma)\,d\sigma\,d\xi\\
&= \int\int  e^{i x\cdot\sigma}\hat{u}(\sigma)e^{i x\cdot(\xi-\sigma)}a(x,\xi)\hat{b}(\xi-\sigma,\sigma,\tau)\,d\xi\,d\sigma\\
&:=\int  e^{i x\cdot\sigma}c(x,\sigma,\tau)\hat{u}(\sigma)\,d\sigma
 \end{align*}
where we have interchanged the order of integration as well as defined 
\begin{equation}\label{eqn:sybol_of_compos}
c(x,\sigma,\tau)=\int  e^{i x\cdot\omega} a(x,\omega+\sigma)\hat{b}(\omega,\sigma,\tau)\,d\omega.
\end{equation}
Direct estimation as in the proof of Lemma \ref{lem:prod_symbl} gives
\begin{align*}
|\partial^{\alpha}_{\sigma}\partial^{\beta}_{x}c|&\leq  \sum_{\substack{\gamma\leq\alpha\\ \mu\leq\beta}}C_{\gamma\mu}\int\abs{\partial^{\beta-\mu}_{x}e^{i x\cdot\omega} \partial^{\gamma}_{\sigma}\partial^{\mu}_{x}a(x,\omega+\sigma)\partial^{\alpha-\gamma}_{\sigma}\hat{b}(\omega,\sigma,\tau)}d\omega\\
&\leq\sum_{\gamma,\mu}C\int (1+|\omega+\sigma|)^{m_1-|\gamma|} (1+|\omega|)^{-N+|\beta-\mu|}(\tau+|\sigma|)^{m_2-|\alpha-\gamma|}d\omega.
\end{align*}
Using once again Peetre's inequality, the fact $|\alpha|\leq m_1$ and  taking $N$ sufficiently large we get
\[|\partial^{\alpha}_{\sigma}\partial^{\beta}_{x}c(x,\sigma,\tau)|\leq C(\tau+|\sigma|)^{m_1+m_2-|\alpha|},\]
proving the first part of the proposition. The second part follows by Taylor expanding $a(x,\omega+\sigma)$ in \eqref{eqn:sybol_of_compos} and estimating the remainder using Corollary \ref{cor:crude_comp} or as in the estimates for $c$ above. We leave the details to the reader.
\end{proof}
Combined, these results allow us to develop a symbol calculus to handle pseudo-differential operators with symbols which may or may not depend on a parameter.
We have probably provided more detail than is necessary here as the results are really consequences of inequality \eqref{eqn:param_and_non_param_symbl_ineq}
and the usual boundedness theorems.

We end this discussion with some examples. If $a\in S^1$ and $b\in P^{-1}$ then the composition $\op(a)\circ \op(b)$ makes sense as a pseudo-differential operator of order $0$ and
Proposition \ref{prop:crude_symbl_calc} and its corollaries show that we can view the principal symbol $ab$ as a symbol $ab\in P^{0}_{1}$ or $ab\in S^{0}$.
If on the other hand  $a\in S^1$ and $b\in P^{-2}$, it is more helpful to think of $ab$ as belonging to $P_1^{-1}$ than to $S^{-1}$ since the former viewpoint implies special operator bounds.

\subsection{Determination of the Operators}
To lighten the exposition, we first demonstrate the existence of the operators on functional-analytic considerations. Later we will characterize them as pseudo-differential operators. We begin with the observation, used implicitly in the Introduction, that the operator $B$ is a symmetric operator with compact inverse. 
This is a consequence of the following well
known fact:
 \begin{lemma}[Poincar\'e Inequality]
Let $v\in H^{1}(\Omega_2)$ satisfy $\gamma_{_0}v=0$.
Then, for some constant $C>0$, 
\begin{equation*}
\norm{v}_{L^2(\Omega_2)}\leq C \norm{\nabla v}_{L^2(\Omega_2)}.
\end{equation*} 
\end{lemma}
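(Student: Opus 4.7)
My plan is to argue by contradiction, combining the Rellich--Kondrachov compactness theorem with continuity of the trace operator. Suppose the stated inequality fails. Then I can produce a sequence $\{v_n\}\subset H^{1}(\Omega_2)$ satisfying $\gamma_{_0}v_n=0$ on $\Gamma_1$, $\norm{v_n}_{L^2(\Omega_2)}=1$, and $\norm{\nabla v_n}_{L^2(\Omega_2)}\leq 1/n$. In particular $\{v_n\}$ is bounded in $H^{1}(\Omega_2)$.

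Since $\Omega_2$ has smooth boundary $\Gamma\cup\Gamma_1$, the embedding $H^{1}(\Omega_2)\hookrightarrow L^2(\Omega_2)$ is compact, so after passing to a subsequence I may assume $v_n\to v$ strongly in $L^2(\Omega_2)$. Because $\nabla v_n\to 0$ in $L^2(\Omega_2)$ by construction, the convergence actually takes place in $H^{1}(\Omega_2)$, and the limit satisfies $\nabla v=0$ as a distribution while retaining $\norm{v}_{L^2(\Omega_2)}=1$. The domain $\Omega_2=\Omega\setminus\overline{\Omega}_1$ is connected (any connected component must meet $\Gamma_1$, since a component whose boundary lay entirely in $\Gamma=\partial\Omega$ would itself be a component of $\Omega$, contradicting $\Omega_1\Subset\Omega$), so $v$ must be a nonzero constant on $\Omega_2$.

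On the other hand, the trace $\gamma_{_0}:H^{1}(\Omega_2)\to H^{1/2}(\Gamma_1)$ is continuous (the analogue of Theorem \ref{thm:trace_thm} transferred to $\Omega_2$ via local charts), so
\[
\gamma_{_0}v=\lim_{n\to\infty}\gamma_{_0}v_n=0\quad\text{in }H^{1/2}(\Gamma_1).
\]
A nonzero constant cannot have vanishing trace on $\Gamma_1$, which contradicts $\norm{v}_{L^2(\Omega_2)}=1$. This proves the Poincar\'e inequality, with a constant $C=C(\Omega_2)$ whose dependence on the geometry is implicit in this compactness argument. The only real care point is the connectedness of $\Omega_2$; should that fail, the argument applies verbatim to each connected component, each of which by the observation above contains a piece of the Dirichlet portion $\Gamma_1$ of its boundary.
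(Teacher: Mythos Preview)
Your argument is the standard compactness/contradiction proof of the Poincar\'e inequality and is essentially correct. The paper does not actually supply a proof of this lemma; it is stated as a ``well known fact'' and used immediately, so there is nothing to compare on the level of technique.

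One small point of presentation: your parenthetical argument does \emph{not} establish that $\Omega_2$ is connected---it only shows that every connected component of $\Omega_2$ has a nonempty portion of $\Gamma_1$ in its boundary. (Indeed, if $\Omega$ were a ball and $\Omega_1$ a concentric annulus, $\Omega_2$ would have two components.) You already anticipate this in your final sentence, so the proof is complete as written; but it would read more cleanly if you dropped the unsupported connectedness claim and went directly to the component-wise argument, using the observation that each component meets $\Gamma_1$.
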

The invertibility of $B$ allows us to define the Poisson operator $\sK$ which satisfies
\begin{equation}\label{eqn:Dir_poiss_op}
\left\{\begin{aligned}
-\lap(\sK\varphi) &= 0,\quad\text{in\,}\,\Omega_2;\\
\gamma_{_0}(\sK\varphi) &=\varphi,\quad\text{on\,}\,\Gamma_1;\\
\at{(\sK\varphi)}{\Gamma} &=0,
\end{aligned}
\right.
\end{equation}
for $\varphi\in H^{\frac{3}{2}}(\Gamma_1)$.

For $\lambda\geq1$, we define the operator $A^{-1}_{\lambda,\nu}$ which is the inverse of (the closure of) 
$-\lap+\lambda$ acting in $L^2(\Omega_1)$, i.e. in the ``interior domain",  with \emph{Neumann} boundary conditions on $\Gamma_1$. 
That is, for $f\in L^2(\Omega_1)$,the function $w=A^{-1}_{\lambda,\nu}f$ satisfies:
\begin{equation*}
\left\{\begin{aligned}
(-\lap+\lambda)w &= f,\quad\text{in\,}\,\Omega_1;\\
\gamma_{_1}w&=0,\quad\text{on\,}\,\Gamma_1.
\end{aligned}
\right.
\end{equation*}
Another standard functional analysis argument shows the existence of $A^{-1}_{\lambda,\nu}$. With this in mind, we may define the associated 
Poisson operator $\sKl$ which now solves, for $\varphi\in H^{\frac{1}{2}}(\Gamma_1)$,
\begin{equation}\label{eqn:Int_Neu_poiss_op}
\left\{\begin{aligned}
(-\lap+\lambda)(\sKl\varphi) &= 0,\quad\text{in\,}\,\Omega_1;\\
\gamma_{_1}(\sKl\varphi)&=\varphi,\quad\text{on\,}\,\Gamma_1.
\end{aligned}
\right.
\end{equation}
As discussed in the Introduction, we solve the equation $A_\lambda u=f$ by solving the transmission problem \eqref{eqn:trans_PDE}--\eqref{eqn:trans_BC}
which we recall for the readers convenience:
\begin{align*}
(-\lap+\lambda)u_1 &=f_1;\quad x\in\Omega_1,\\
-\lap u_2 &=f_2;\quad x\in\Omega_2\\
\gamma_{_0}u_1&=\gamma_{_0}u_2\\
\gamma_{_1} u_1&=\gamma_{_1} u_2\\
\at{\partial_\nu u_2}{\Gamma}&=0.
\end{align*}
We put $\varphi_0=\gamma_{_0}u_1$ and $\varphi_1=\gamma_{_1}u_1$ and we treat  $\varphi_0$ and  $\varphi_1$ as unknown functions.
It is easy to verify that
\begin{equation}\label{eqn:ext_soln}
u_2=B^{-1}f_2+\sK\varphi_0,
\end{equation}
 and
\begin{equation}\label{eqn:int_soln}
u_1=A^{-1}_{\lambda,\nu}f_1+\sKl\varphi_1
\end{equation}
furnish a solution to the transmission problem. We only have to determine the unknown boundary values  $\varphi_0$ and  $\varphi_1$. 
To this end, we apply the trace operators $\gamma_{_1}$ to \eqref{eqn:ext_soln} and  $\gamma_{_0}$ to \eqref{eqn:int_soln} to obtain 
the system of equations on $\Gamma_1$:
\begin{equation}\label{eqn:boundary_sys}
\begin{pmatrix}
Id &&-\gamma_{_0}\sKl \\
- \gamma_{_1}\sK&& Id
\end{pmatrix}
\begin{pmatrix}
\varphi_0\\
\varphi_1
\end{pmatrix}
=
\begin{pmatrix}
\gamma_{_0}A^{-1}_{\lambda,\nu}f_1\\
\gamma_{_1}B^{-1}f_2
\end{pmatrix}.
\end{equation}

Here $Id$ is the identity operator and $\gamma_{_0}\sKl$ is the composition of the two operators. Note that $\gamma_{_0}\sKl$  and $\gamma_{_1}\sK$ 
are well defined operators with  $\gamma_{_0}\sKl:H^{\frac{1}{2}}(\Gamma_1) \to H^{\frac{3}{2}}(\Gamma_1)$ and 
 $\gamma_{_1}\sK:H^{\frac{3}{2}}(\Gamma_1) \to H^{\frac{1}{2}}(\Gamma_1)$. The next theorem pushes the whole program through:
\begin{theorem}\label{thm:exist_of_PsDOs}
Let $\sN :=\gamma_{_0}\sKl$ and $\sD:= Id-\gamma_{_1}\sK\sN$. Then $\sN$ and $\sD$ are elliptic pseudodifferential operators of orders $-1$ and $0$ 
respectively acting in $H^{\frac{1}{2}}(\Gamma_1)$ . If we define $\sW =-\sN\sD^{-1}$, then $\sW$ is also an elliptic pseudodifferential operator of order $-1$.
In particular $\sN\in\op(P^{-1})$, $\sD\in\op(P^{0})$ and $\sW\in\op(P^{-1}_1)$.
\end{theorem}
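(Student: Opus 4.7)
The strategy is to work locally on a tubular neighborhood of $\Gamma_1$, flattening $\Gamma_1$ to $\{x_n=0\}$ so that $\Omega_1$ corresponds to $\{x_n<0\}$ and $\Omega_2$ to $\{x_n>0\}$, construct parametrices for $\sKl$ and $\sK$, and then assemble $\sN$, $\sD$ and $\sW$ via the symbol calculus of Section \ref{subsec:symbl_calc}. In these coordinates the principal symbol of $-\lap+\lambda$ is $\xi_n^2+g^{jk}(x')\xi'_j\xi'_k+\lambda$; the outer boundary $\Gamma$ sits at bounded distance from $\Gamma_1$, so its contribution to any construction localized near $\Gamma_1$ can be absorbed into smoothing remainders via cutoffs.

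For $\sN$, following Agmon's idea of treating $\lambda^{1/2}$ as an extra cotangent variable, I factor the symbol of $-\lap+\lambda$ as $(\xi_n-i\tau)(\xi_n+i\tau)$ with $\tau(x',\xi',\lambda)=\sqrt{g^{jk}(x')\xi'_j\xi'_k+\lambda}\in P^1$ and solve the resulting ODE in $x_n$ by the Calder\'on--Seeley--H\"ormander reduction. The bounded interior solution is $\hat{u}(x_n,\xi')=c(\xi')e^{x_n\tau}$ and matching the Neumann data $\gamma_{_1}u=\varphi$ gives $c=\hat{\varphi}/\tau$, so the Dirichlet trace yields $\sN$ with principal symbol $1/\tau\in P^{-1}$. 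Since this symbol is strictly positive, $\sN$ is elliptic, and the standard iterative construction of lower-order symbolic corrections\,---\,carried out in the parameter-dependent class\,---\,places $\sN\in\op(P^{-1})$ modulo smoothing, $\lambda$-rapidly decreasing remainders.

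The analogous but classical construction for the Dirichlet problem for $-\lap$ in $\Omega_2$ puts $\gamma_{_1}\sK\in\op(S^1)$ with principal symbol $\sqrt{g^{jk}\xi'_j\xi'_k}\sim|\xi'|$. Applying Proposition \ref{prop:crude_symbl_calc} with $(m_1,m_2)=(1,-1)$, the composition $\gamma_{_1}\sK\sN$ is then a pseudodifferential operator of order $0$ whose symbol lies in $P^0_1$, with principal part $|\xi'|/\tau$; hence $\sD\in\op(P^0)$. To invert $\sD$ I combine two observations: Proposition \ref{prop:map_neg_par_psdo} applied to the composition yields $\norm{\gamma_{_1}\sK\sN}_{L^2(\Gamma_1)\to L^2(\Gamma_1)}=\cO(\lambda^{-1/2})$, so $\sD$ is invertible for all sufficiently large $\lambda$ via the Neumann series; and the principal symbol
\[
1-\frac{|\xi'|}{\tau}=\frac{\lambda}{\tau(\tau+|\xi'|)}
\]
is strictly positive, giving the parameter-dependent ellipticity needed for a standard symbolic parametrix, whence $\sD^{-1}\in\op(P^0)$. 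Finally $\sW=-\sN\sD^{-1}$ is a composition of operators in $\op(P^{-1})$ and $\op(P^0)$, and Lemma \ref{lem:prod_symbl} together with the composition rules of Section \ref{subsec:symbl_calc} place $\sW\in\op(P^{-1}_1)$.

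The main technical obstacle will be the parametrix construction for $\sKl$ in the parameter-dependent class: although the principal symbol $1/\tau$ is immediate, one must verify that each iterative symbolic correction stays in $P^{-1-j}$ (rather than drifting into $S^{-1-j}$ and losing the $\lambda$-control) and that the remainders are genuinely smoothing uniformly in $\lambda$. This is a parameter-dependent version of the classical boundary-reduction argument, for which the ingredients are in place (cf.\ Agranovich \cite{Agranovich:1971fk}, Grubb \cite{Grubb:1978uq}) but which requires careful bookkeeping at each stage of the expansion.
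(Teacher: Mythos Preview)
Your overall route\,---\,localize, flatten, solve the normal-variable ODEs to build parametrices for $\sKl$ and $\sK$, then read off the boundary operators from their traces\,---\,is precisely the paper's. But there is a sign slip in the symbol of $\gamma_{_1}\sK$, and it is not cosmetic: it breaks both of your arguments for inverting $\sD$.

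With your convention $\Omega_2=\{x_n>0\}$, the bounded exterior Dirichlet extension is $e^{-x_n|\xi'|_g}\hat\varphi$; applying the \emph{same} normal $\partial_{x_n}$ that you used on the $\Omega_1$ side (the transmission condition $\gamma_{_1}u_1=\gamma_{_1}u_2$ forces one common choice of normal across $\Gamma_1$) gives $\gamma_{_1}\sK$ the principal symbol $-|\xi'|_g$, not $+|\xi'|_g$. The familiar statement ``the Dirichlet--to--Neumann map has symbol $+|\xi'|$'' refers to the outward normal of $\Omega_2$, which here is $-\partial_{x_n}$. With the corrected sign the principal symbol of $\sD$ is
\[
1+\frac{|\xi'|_g}{\tau}\in[1,2),
\]
uniformly bounded away from zero in $(\xi',\lambda)$; this is exactly the parameter-ellipticity the paper uses to produce $\sD^{-1}\in\op(P^0)$, and it then gives $\sW$ the principal symbol $-\bigl(|\xi'|_g+\tau\bigr)^{-1}$, matching \eqref{eqn:weyl_symbl_intro}. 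Your computed symbol $1-|\xi'|/\tau=\lambda\bigl(\tau(\tau+|\xi'|)\bigr)^{-1}$ is \emph{not} an elliptic element of $P^0$: it decays like $|\xi'|^{-2}$ for fixed $\lambda$, so no order-zero parametrix can be built from it. The Neumann-series fallback also fails: the principal symbol of $\gamma_{_1}\sK\sN$ is $\pm|\xi'|/\tau$, whose supremum over $\xi'$ equals $1$ for every $\lambda$, and Proposition~\ref{prop:map_neg_par_psdo} together with Corollary~\ref{cor:crude_comp} yield only $\norm{\gamma_{_1}\sK\sN u}_{r-1}\leq C\lambda^{-1/2}\norm{u}_r$, never an $H^s\to H^s$ bound that is small. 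Once the sign is fixed, the remainder of your sketch lines up with the paper's construction in \S2.4--\S2.6.
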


We will later sketch the proof of this theorem in the remaining subsections. For now we show
\begin{corollary}\label{cor:main_result_corr}
 Suppose that $u$ solves $A_\lambda u = f$ and that $f_1=0$ i.e.,  $f$ has support in $\Omega_2$. Then it holds that
\begin{equation}\label{eqn:bound_soln}
\varphi_0=\sN\varphi_1,\quad\text{and}\quad \varphi_1=\sD^{-1}(\gamma_{_1}B^{-1}f_2), 
\end{equation}
or $\gamma_{_0}u_2=\sN\gamma_{_1}u_2$ where $u_i = \at{u}{\Omega_i}$ for $i=1,2$.
\end{corollary}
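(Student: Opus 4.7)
The proof is essentially an algebraic manipulation of the boundary system \eqref{eqn:boundary_sys} once Theorem \ref{thm:exist_of_PsDOs} is in hand, so the plan is to just spell out the substitutions.

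First, I would specialize the boundary system to the case $f_1 = 0$. The right-hand side of \eqref{eqn:boundary_sys} collapses to $(0,\gamma_{_1}B^{-1}f_2)^T$, so the first row reads $\varphi_0 - \gamma_{_0}\sKl\varphi_1 = 0$. Recognizing the definition $\sN = \gamma_{_0}\sKl$ from Theorem \ref{thm:exist_of_PsDOs} gives immediately $\varphi_0 = \sN\varphi_1$, which is the first half of \eqref{eqn:bound_soln}.

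Next, I would substitute this relation into the second row of \eqref{eqn:boundary_sys}, namely $-\gamma_{_1}\sK\varphi_0 + \varphi_1 = \gamma_{_1}B^{-1}f_2$. This yields
\[
(Id-\gamma_{_1}\sK\sN)\varphi_1 = \gamma_{_1}B^{-1}f_2,
\]
and since the operator on the left is precisely $\sD$, I can invoke the invertibility of $\sD$ (which is the one non-trivial input, furnished by Theorem \ref{thm:exist_of_PsDOs} via the ellipticity of $\sD$ as a $\Psi$DO of order $0$ and the construction of $\sW = -\sN\sD^{-1}$) to conclude $\varphi_1 = \sD^{-1}(\gamma_{_1}B^{-1}f_2)$. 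This is the second half of \eqref{eqn:bound_soln}.

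Finally, for the identity $\gamma_{_0}u_2 = \sN\gamma_{_1}u_2$, I would use the transmission conditions \eqref{eqn:trans_BC} together with the definitions $\varphi_0 = \gamma_{_0}u_1 = \gamma_{_0}u_2$ and $\varphi_1 = \gamma_{_1}u_1 = \gamma_{_1}u_2$. Combining with $\varphi_0 = \sN\varphi_1$ gives the result. The only real obstacle in this corollary is the invertibility of $\sD$; everything else is tracking through the definitions, and I do not anticipate any further technical difficulties beyond what is subsumed in Theorem \ref{thm:exist_of_PsDOs}.
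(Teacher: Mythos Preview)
Your proposal is correct and follows essentially the same route as the paper: specialize the boundary system \eqref{eqn:boundary_sys} to $f_1=0$ so that the top entry of the right-hand side vanishes, read off $\varphi_0=\sN\varphi_1$ from the first row, substitute into the second row to obtain $\sD\varphi_1=\gamma_{_1}B^{-1}f_2$, and invoke the invertibility of $\sD$ from Theorem \ref{thm:exist_of_PsDOs}. The paper additionally remarks that elliptic regularity and the trace theorem justify $\gamma_{_0}A^{-1}_{\lambda,\nu}f_1=0$, but since $f_1=0$ this is immediate anyway, so your streamlined version is fine.
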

This is one of our main results advertised in the Introduction.
\begin{proof}
It follows that $A^{-1}_{\lambda,\nu}f_1=0$ and by  standard elliptic regularity theory,  $A^{-1}_{\lambda,\nu}f_1$ belongs to $H^2$.
By the trace theorem we have that $\gamma_{_0}A^{-1}_{\lambda,\nu}f_1=0$ and the first equation in \eqref{eqn:boundary_sys} shows that
$\varphi_0=\sN\varphi_1$. Now the second equation in  \eqref{eqn:boundary_sys} and the ellipticity of $\sD$ show that 
$\varphi_1=\sD^{-1}(\gamma_{_1}B^{-1}f_2)$.
\end{proof} 
The rest of this section is devoted to a sketch of the construction of $\sN$ as a pseudo-differential operator. We will skip some technical details to keep the
 paper to a reasonable length.

\subsection{Localization}
The statement of our main result is local and allows us to reduce, via a partition of unity and a local coordinate change, to considering a problem in the
 neighborhood of the origin in $\R^n$. 

More precisely, let $x_0$ be a point  in $\Gamma_1$. By assumption there is a local chart $U$ of $x_0$ and a $C^\infty$ change of coordinates
which locally ``flattens" the boundary. We write the change of variables as $y=\Phi(x)$ and we assume it is of the form, possibly after a rotation,
translation and relabeling:
\begin{equation}\label{eq:ChangeofVar}
\left\{\begin{aligned}
y_i &=x_i,&&1\leq i\leq n-1\\
y_i &= x_i -\chi(x'),&& i=n.
\end{aligned}
\right.
\end{equation}

The boundary $\Gamma_1$ is now identified with the plane $y_n=0$ after the coordinate change as in Figure \ref{fig:flatten_fig}.
By the well known change of coordinates formula we can rewrite the
Laplacian in such local coordinates as
 \begin{equation*}
P(y',D):= \sum_{j,k}{A_{jk}(y') \frac{\partial^2}{\partial_{y_{j}}\partial_{y_{k}}}} + \sum_{j}{b_j(y') \frac{\partial}{\partial_{y_j}}}
 \end{equation*}
 where
 \begin{equation*}\label{eq:LocCoeff}
A =
\begin{pmatrix}
I&&-\nabla_{x'}\chi\\
-(\nabla_{x'}\chi)^t&&1+|\nabla_{x'}\chi|^2
\end{pmatrix},\quad\text{and}\quad
b_j = 
\begin{cases}
0,&\text{for\,}\,j\leq n-1;\\
-\lap\chi, &\text{for\,}\,j= n.
\end{cases}
\end{equation*}
We note that $\det(A)=1$ and that $P(y',D)$ is uniformly elliptic.
The following lemma is nearly obvious and allows us to simplify the expression for the normal derivative. The proof is a consequence
of $\Gamma_1$ being non-characteristic which in turn is a consequence of ellipticity:
 \begin{lemma}\label{lem:normal_deriv}
 Let $g_1(x)$ and $g_2(x)$ be $C^1$ functions such that $\at{g_1}{\Gamma_1} = \at{g_2}{\Gamma_1}$, then 
 $\at{\partial_{n}g_1}{\Gamma_1}=\at{\partial_{n}g_2}{\Gamma_1}$ if and only if $ \partial_{x_n}g_1(x',\chi(x'))=\partial_{x_n}g_2(x',\chi(x'))$.
\end{lemma}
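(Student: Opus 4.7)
The plan is to write out the normal derivative to $\Gamma_1$ explicitly in the original $x$-coordinates and use the hypothesis $\at{g_1}{\Gamma_1}=\at{g_2}{\Gamma_1}$ to reduce it to the $x_n$-derivative evaluated on the surface. This mirrors the chain-rule computation that produced the matrix $A$ in \eqref{eq:ChangeofVar} and its sequel.

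First I would note that, with $\Gamma_1$ locally given as the graph $x_n=\chi(x')$, the outward unit normal at a point $(x',\chi(x'))$ is $\nu=(1+|\nabla_{x'}\chi|^2)^{-1/2}(-\nabla_{x'}\chi,\,1)$. Its $x_n$-component is $(1+|\nabla_{x'}\chi|^2)^{-1/2}$, which is nowhere zero; this is exactly the non-characteristic condition, automatic here because $-\lap$ is elliptic, and it is the only geometric input that will matter. The normal derivative then reads $\partial_{\nu}g|_{\Gamma_1}=(1+|\nabla_{x'}\chi|^2)^{-1/2}\bigl(\partial_{x_n}g-\nabla_{x'}\chi\cdot\nabla_{x'}g\bigr)(x',\chi(x'))$.

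Next I would differentiate the identity $g_1(x',\chi(x'))=g_2(x',\chi(x'))$ in each tangential coordinate $x_i$, $i\leq n-1$. Writing $h=g_1-g_2$, the chain rule gives $\partial_{x_i}h(x',\chi(x'))+\partial_{x_i}\chi(x')\,\partial_{x_n}h(x',\chi(x'))=0$, which is the same as $\nabla_{x'}h(x',\chi(x'))=-(\nabla_{x'}\chi)\,\partial_{x_n}h(x',\chi(x'))$. Plugging this into the displayed formula for $\partial_\nu h|_{\Gamma_1}$ collapses the tangential gradient and leaves $\partial_\nu h|_{\Gamma_1}=(1+|\nabla_{x'}\chi|^2)^{1/2}\,\partial_{x_n}h(x',\chi(x'))$. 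Since the prefactor is strictly positive, $\at{\partial_\nu g_1}{\Gamma_1}=\at{\partial_\nu g_2}{\Gamma_1}$ if and only if $\partial_{x_n}g_1(x',\chi(x'))=\partial_{x_n}g_2(x',\chi(x'))$, giving the claim.

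There is no real obstacle: as the author notes, the lemma is genuinely nearly obvious. The only substantive observation is the transversality of the $x_n$-direction to $\Gamma_1$, which is what allows the tangential constraint on $h$ to pin down $\nabla_{x'}h|_{\Gamma_1}$ as a scalar multiple of $\partial_{x_n}h(x',\chi(x'))$ and then lets us divide out the non-vanishing factor $(1+|\nabla_{x'}\chi|^2)^{1/2}$ at the end.
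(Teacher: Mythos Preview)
Your proof is correct and is precisely the computation the paper is gesturing at: the paper does not give a detailed argument but only remarks that the result follows from $\Gamma_1$ being non-characteristic, which is exactly the non-vanishing of the $x_n$-component of $\nu$ that you isolate and use to divide out at the end. Your chain-rule reduction of $\partial_\nu h|_{\Gamma_1}$ to $(1+|\nabla_{x'}\chi|^2)^{1/2}\,\partial_{x_n}h(x',\chi(x'))$ is the standard way to make that hint precise.
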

After relabeling our coordinates, we see that we must consider the following P.D.E in \emph{local coordinates}:
\begin{align*}
  (P(x',D)-\lambda)u_1&=f_1; \quad x\in \R^n_{+}\\
P(x',D) u_2&=f_2;\quad x\in \R^n_{-}\\
u_1(x',0) &= u_2(x',0)\\
\partial_{x_n}u_1(x',0)&=\partial_{x_n}u_2(x',0).
\end{align*}
We are only really interested in compactly supported solutions to the above equations since they arise out of our localization procedure. 
Hence we may assume that all the
data are supported in a ball $B_\delta(0)$ of radius $\delta$ near the origin.

\begin{figure}
\begin{center}
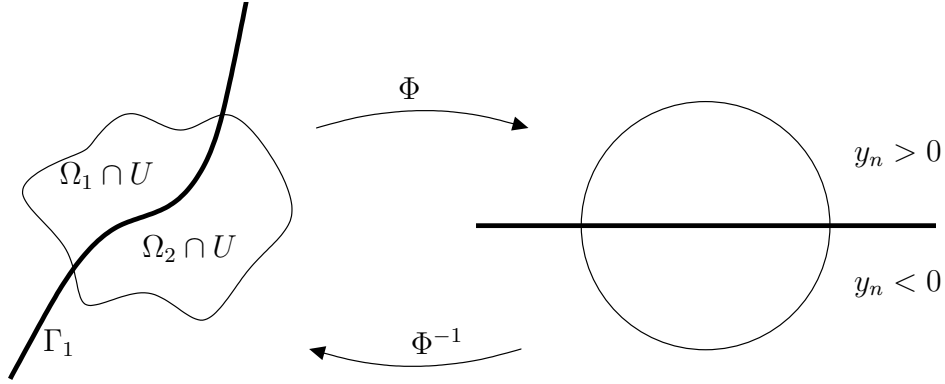
\end{center}
\caption{Flattening the boundary $\Gamma_1$ }
\label{fig:flatten_fig}
\end{figure}

The point is to reduce this to a study of ODEs in the normal variable $x_n$ by taking partial Fourier transform in $x'$. Thus we must consider 
the polynomial in $z$ with complex coefficients:
\begin{equation*}
p(x',\xi',z):=A_{nn}(x')z^2+2iz\sum_{k=1}^{n-1}A_{nk}(x')\xi_k-|\xi'|^2,
\end{equation*}
obtained by taking Fourier--Laplace transforms of the principal term of $P(x',D)$ in the tangential and normal directions respectively with ``frozen coefficients". 
Similarly let $q(x',\xi',\omega,\lambda):= p(x',\xi',\omega)-\lambda$, that is,
\[q(x',\xi',\omega,\lambda) =A_{nn}(x')\omega^2+2i\omega\sum_{k=1}^{n-1}A_{nk}(x')\xi_k-(|\xi'|^2+\lambda) \]
which the principal symbol of $P(x',D)-\lambda$ obtained by treating $\lambda$ as an extra cotangent variable. 
As we have mentioned, this idea goes back at least to \cite{Agmon:1965fk} and was further refined in {\sc Agranovich--Vishik} \cite{Agranovich:1964zr}, and in {\sc Seeley} \cite{Seeley:1968vn} using $\Psi$DO techniques. The roots of these polynomials are given by
\begin{equation}
z_{\pm} =\frac{1}{A_{nn}(x')}\paren{\pm\sqrt{A_{nn}(x')|\xi'|^2-\paren{\sum_{k=1}^{n-1}A_{nk}(x')\xi_k}^2}-i\sum_{k=1}^{n-1}A_{nk}(x')\xi_k},
\end{equation}
and \begin{equation}
\omega_{\pm} =\frac{1}{A_{nn}}\paren{\pm\sqrt{A_{nn}(x')(|\xi'|^2+\lambda)-\paren{\sum_{k=1}^{n-1}A_{nk}(x')\xi_k}^2}-i\sum_{k=1}^{n-1}A_{nk}(x')\xi_k}
\end{equation}
It is easy to check that $z_\pm(x',\xi')$ and $\omega_\pm(x',\xi',\lambda)$ are homogenous of degree $1$ in $\xi'$ and $(\xi,\lambda^{\frac{1}{2}})$
respectively and never vanish for $|\xi'|\neq0$. In particular, $\Re z_-<0<\Re z_+$ and $\Re\omega_-<0<\Re\omega_+$  for  $|\xi'|\neq0$.
For fixed $x\in B_\delta(0)$, one can show that
\[|z_\pm(x',\xi')|\leq C(1+|\xi'|^2)^{\frac{1}{2}},\]
 if $|\xi'|\geq1$ and that
\[|\omega_\pm(x',\xi',\lambda)|\leq C(\lambda+|\xi'|^2)^{\frac{1}{2}},\]
for $\lambda\geq1$, where the constants depend only on $\delta$ and the local coordinate map $\chi$.  Direct estimation of the derivatives, or
 availing ourselves of the homogeneity of the symbols, show that $z_\pm(x',\xi')$ and $\omega_\pm(x',\xi',\lambda)$ belong to the symbol classes 
 $S^1$ and $P^1$ respectively.

\subsection{The Poisson Operators}
Let $\psi_0(x,\xi')$ be smooth compactly supported function vanishing in a neighborhood of $\xi'=0$, identically 1 for $|x|\leq\delta$ 
and vanishing outside $|x|\geq 2\delta$. Let $\psi_1(x)$ be identically 1 for $|x|\leq\delta$ and vanishing outside $|x|\geq 2\delta$. 
Put $\tau(x',\xi') = z_+(x',\xi') $ and $\eta(x',\xi',\lambda)  =\omega_-(x',\xi',\lambda)$.

As a first step, we define, for $ x_n<0$, and $ x_n>0$ respectively, the operators
\begin{equation}
(K\varphi)(x):= \frac{1}{(2\pi)^{n-1}}\int\limits_{\R^{n-1}}e^{ix'\cdot\xi'}\psi_0(x,\xi')e^{x_n\tau(x',\xi')}\tilde{\varphi}(\xi')\,d\xi,
\end{equation}
and
\begin{equation}
(K_{\lambda}\varphi)(x):= \frac{1}{(2\pi)^{n-1}}\int\limits_{\R^{n-1}}e^{ix'\cdot\xi'}\psi_1(x)\frac{e^{x_n\eta(x',\xi',\lambda)}}{\eta(x',\xi',\lambda)}\tilde{\varphi}(\xi')\\,d\xi',
\end{equation}
where $\varphi(x')$ is supported in $|x'|\leq\delta$. These formulae arise out of the solution to the associated O.D.E. For instance if we put $h =K\varphi$ then $h$ satisfies the following O.D.E with frozen coefficients:
\begin{align*}
p(x',\xi',\partial_{x_n})\tilde{h}(\xi',x_n)&=0;\quad (x_n<0)\\
\tilde{h}(\xi',0) &=\tilde{\varphi}(\xi').
\end{align*}
It is not too difficult to show that the integrals defining the operators above are absolutely 
convergent if $\varphi(x')$ is nice, say $\varphi\in \cS(\R^{n-1})$. 

It is possible to show that 
$K:H^{\frac{3}{2}}(\R^{n-1}) \to H^{2}(\R^n_-)$ and $K_\lambda:H^{\frac{1}{2}}(\R^{n-1}) \to H^{2}(\R^n_+)$ are continuous maps and, as such, the above expressions admit
traces. That is:
\begin{align*}
\gamma_{_0}K\varphi:=(K\varphi)(x',0)&= \frac{1}{(2\pi)^{n-1}}\int\limits_{\R^{n-1}}e^{ix'\cdot\xi'}\psi_0(x',0,\xi')\tilde{\varphi}(\xi')\,d\xi'\\
&=\varphi +\op'(\psi_0(x',0,\xi')-1)\varphi,
\end{align*}
is well defined as is
\begin{align*}
\gamma_{_1}K_{\lambda}\varphi&= \frac{1}{(2\pi)^{n-1}}\int\limits_{\R^{n-1}}e^{ix'\cdot\xi'}\paren{\psi_1(x',0)+\frac{\partial_{x_n}\psi_1(x',0)}{\eta}}\tilde{\varphi}(\xi')\,d\xi'\\
&=\varphi + \op'\paren{\psi_1(x',0)-1+\eta^{-1}\partial_{x_n}\psi_1(x',0)}\varphi.
\end{align*}
Since $\varphi$ is supported in $B_\delta(0)$ and the cut-off functions $\psi_0$ and $\psi_1$ are identically $1$ there, the calculus of 
$\Psi$DO shows that the ``error" terms in the traces above are smooth for any $\varphi$.

If the compositions $P(x',D)\circ K=0$ and $(P(x',D)-\lambda)\circ K_\lambda=0$, then we are in business. However,
another computation shows that $P(x',D)\circ K=Q$ and also that $(P(x',D)-\lambda)\circ K_\lambda= Q_\lambda$ for some operators $Q$ and $Q_\lambda$.
It is possible to show that these operators, which are like error terms, map bounded sets
to compact sets. More precisely, $Q_\lambda:H^{\frac{1}{2}}(\R^{n-1}) \to H^{1}(\R^n_+)$ and $Q:H^{\frac{3}{2}}(\R^{n-1}) \to H^{1}(\R^n_-)$ are bounded operators.
Thus, modulo compact operators, $P(x',D)\circ K=0$ and $(P(x',D)-\lambda)\circ K_\lambda=0$. With some more work, one can construct local approximations, i.e. 
in a neighborhood of $\Gamma_1$, of $\sK$ and $\sKl$ with ``leading terms" $K$ and $K_\lambda$ respectively.

\subsection{The Symbols}
We can now directly verify that
\begin{align*}
\gamma_{_1}K\varphi&=\frac{1}{(2\pi)^{n-1}}\int\limits_{\R^{n-1}}e^{ix'\cdot\xi'}\tau(x',\xi')(\psi_0(x',0,\xi')+\partial_{x_n}\psi_0(x',0,\xi'))\tilde{\varphi}(\xi')\,d\xi'\\
&=\op'(\tau)\varphi + \op'(\psi_0(x',0,\xi')+\partial_{x_n}\psi_0(x',0,\xi')-1)\varphi
\end{align*}
and
\begin{align*}
\gamma_{_0}K_\lambda\varphi&=\frac{1}{(2\pi)^{n-1}}\int\limits_{\R^{n-1}}e^{ix'\cdot\xi'}\frac{\psi_1(x',0)}{\eta(x',\xi',\lambda)}\tilde{\varphi}(\xi')\,d\xi'\\
&=\op'(1/\eta)\varphi + \op'(\psi_1(x',0)-1)\varphi.
\end{align*}
Hence the principal symbol of $\sN$ is $1/\eta(x',\xi',\lambda)$ which never vanishes for $\lambda\geq1$. This shows that $\sN$ is elliptic 
and of order $-1$. Also  $\tau(x',\xi')$, the principal symbol of $\gamma_1\sK$, never vanishes for $|\xi'|\geq1$ and thus is also elliptic.
From the symbol calculus in \S\,\ref{subsec:symbl_calc}, it follows that the principal symbol of $ Id-\gamma_{_1}\sK\sN$ is given by 
\[1-\frac{\tau(x',\xi')}{\eta(x',\xi',\lambda)} = \frac{\Re\eta(x',\xi',\lambda)-\Re\tau(x',\xi')}{\eta(x',\xi',\lambda)},\]
which is seen to be elliptic and in fact uniformly bounded away from $0$ in modulus for $\lambda\geq1$. Direct estimation also shows that it 
belongs to the symbol class $P^0_1$
being homogenous of degree $0$ in $(\xi',\lambda^{\frac{1}{2}})$. Hence $\sD$ is seen to be invertible and of order $0$.

Since $\eta(x,\xi',\lambda)$ belongs to the symbol space $P^{-1}$ it follows from Proposition \ref{prop:map_neg_par_psdo} that
\begin{equation}
\norm{\mathscr{N}_{\lambda}\varphi}_{H^s(\R^{n-1})} \leq
\begin{cases}
\dfrac{C}{\sqrt{\lambda}}\norm{\varphi}_{ H^{\frac{1}{2}}(\R^{n-1})},&\text{for}\quad0\leq s\leq\frac{1}{2}\\
\dfrac{C}{\lambda^{\frac{3}{4}-\frac{s}{2}}}\norm{\varphi}_{ H^{\frac{1}{2}}(\R^{n-1})},&\text{for}\quad\frac{1}{2}\leq s\leq\frac{3}{2},
\end{cases}
\end{equation}
for $\lambda$ sufficiently large.

Finally, we note that the principal symbol of $\sW$ is given by 
\begin{equation}\label{eqn:weyl_symbl_intro}
(\Re\tau(x',\xi') -\Re\eta(x',\xi',\lambda))^{-1}
\end{equation}
 which shows that $\sW$ is semi-bounded and elliptic with real principal symbol and thus, modulo a regularizing operator, is self-adjoint.

\section{Applications}
\subsection{A Convergence Rate}
Recall the definition of the restriction operator and the extension operator:
 \begin{equation*}
r_{_{\Omega_2}}f=\at{f}{\Omega_2};\quad e_{_{\Omega_2}}f=
\begin{cases}
f, &x\in\Omega_2;\\
0, &x\in\Omega_1.
\end{cases}
\end{equation*}
It is a standard fact that $e_{_{\Omega_2}}:H^{s}(\Omega_2)\to H^{s}(\Omega)$ is bounded for $0\leq s\leq \frac{1}{2}$. Since differentiation is local, it is not too hard to show that $D^{\alpha}(r_{_{\Omega_2}}f) =r_{_{\Omega_2}}(D^{\alpha}f)$ for $|\alpha|=k\in\N$. Thus $r_{_{\Omega_2}}:H^{k}(\Omega)\to H^{k}(\Omega)$ is bounded and by interpolation, $r_{_{\Omega_2}}$ extends to a bounded operator in $H^{s}(\Omega)$ for $s\geq0$. Moreover the operators $e_{_{\Omega_2}}$ and $r_{_{\Omega_2}}$ are each others adjoints since they are both bounded and for $f\in L^2(\Omega)$ and $g\in L^2(\Omega_2)$
\[(r_{_{\Omega_2}}f,g)_{_{L^2(\Omega_2)}} =\quad\int_{\Omega_2}fg \quad=\quad\int_{\Omega}fe_{_{\Omega_2}}g \quad=\quad(f,e_{_{\Omega_2}}g)_{_{L^2(\Omega)}}.\] 
Here and in what follows we sometimes drop the differential, $dx$, in integrals. We can now state and prove the following theorem 

\begin{theorem}[The Large Coupling Limit]\label{thm:oper_conv_diff}
The operator $r_{_{\Omega_2}}A^{-1}_{\lambda}e_{_{\Omega_2}}-B^{-1}$ is compact and it holds that
\begin{equation*}
\norm{r_{_{\Omega_2}}A^{-1}_{\lambda}e_{_{\Omega_2}}-B^{-1}}_{op} =\mathcal{O}(\lambda^{-\frac{1}{2}})
\end{equation*}
where the operator norm is taken in $L^2(\Omega_2)$.
\end{theorem}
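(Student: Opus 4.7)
The plan is to combine Corollary \ref{cor:main_result_corr} with the Green's formula from the Introduction (Lemma \ref{lem:green_formula}) to reduce the operator norm to a boundary pairing on $\Gamma_1$ involving $\sN$, and then invoke the parameter-dependent mapping estimate for $\sN$ from Proposition \ref{prop:map_neg_par_psdo}.

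First, fix $f,g\in L^2(\Omega_2)$ and set $u=r_{_{\Omega_2}}A^{-1}_{\lambda}e_{_{\Omega_2}}f$ and $v=B^{-1}g$. Since $-\lap u=f$ and $-\lap v=g$ on $\Omega_2$, $\gamma_{_0}v=0$ on $\Gamma_1$, and both $u,v$ satisfy the Neumann condition on $\Gamma$, an application of Green's identity in $\Omega_2$ together with the self-adjointness of $B^{-1}$ yields, up to an overall sign,
\[
((r_{_{\Omega_2}}A^{-1}_{\lambda}e_{_{\Omega_2}}-B^{-1})f,g)_{L^2(\Omega_2)}=\dpair{\gamma_{_0}u}{\gamma_{_1}v}_{L^2(\Gamma_1)}.
\]
Substituting the boundary relation $\gamma_{_0}u=\sN\gamma_{_1}u$ from Corollary \ref{cor:main_result_corr} reproduces the Green's formula of the Introduction.

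Next I would bound each factor on the right-hand side. The trace theorem combined with the elliptic regularity $B^{-1}:L^2(\Omega_2)\to H^2(\Omega_2)$ gives $\norm{\gamma_{_1}v}_{L^2(\Gamma_1)}\leq C\norm{g}_{L^2(\Omega_2)}$. For $\gamma_{_1}u$, the representation $\gamma_{_1}u=\sD^{-1}(\gamma_{_1}B^{-1}f)$ from Corollary \ref{cor:main_result_corr}, together with the uniform-in-$\lambda$ invertibility of $\sD$ on $L^2(\Gamma_1)$ (its principal symbol in $P^0_1$ being uniformly bounded away from $0$ for $\lambda\geq 1$, as verified in the symbol calculation of Section 2.5), gives $\norm{\gamma_{_1}u}_{L^2(\Gamma_1)}\leq C\norm{f}_{L^2(\Omega_2)}$. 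Finally, since $\sN\in\op(P^{-1}_0)$, Proposition \ref{prop:map_neg_par_psdo} with $m=-1$, $r=s=0$ (after patching the local symbol-class estimates to the closed manifold $\Gamma_1$ via a partition of unity) yields $\norm{\sN}_{L^2(\Gamma_1)\to L^2(\Gamma_1)}\leq C\lambda^{-1/2}$ for $\lambda$ large. Cauchy--Schwarz combined with the duality $\norm{T}_{op}=\sup_{\norm{f},\norm{g}\leq 1}\abs{(Tf,g)}$ then delivers the claimed bound.

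Compactness of the difference follows either from its factorization through the $\sN$ block (a pseudo-differential operator of order $-1$ on the closed manifold $\Gamma_1$, hence compact on $L^2(\Gamma_1)$ by Rellich), or more directly because $A_\lambda^{-1}$ and $B^{-1}$ are each individually compact as inverses of coercive elliptic operators on bounded smooth domains. The main obstacle I anticipate is making the Green's identity step fully rigorous: because the potential $\lambda\ind_{\Omega_1}$ is only bounded, $u\in H^2(\Omega)$ with a possibly $\lambda$-dependent norm, and one must ensure the traces $\gamma_{_0}u,\gamma_{_1}u$ from the $\Omega_2$ side are well defined and the pairing is justified uniformly. Fortunately the pseudo-differential representations $\gamma_{_1}u=\sD^{-1}\gamma_{_1}B^{-1}f$ and $\gamma_{_0}u=\sN\gamma_{_1}u$ already give these traces a priori bounds in $H^{1/2}(\Gamma_1)$ uniform in $\lambda$, circumventing any need to estimate $u$ itself in $H^2(\Omega_2)$ with $\lambda$-uniform constants.
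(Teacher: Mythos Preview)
Your proposal is correct and follows essentially the same route as the paper: Green's identity reduces the bilinear form to the boundary pairing $\dpair{\sN\gamma_{_1}u}{\gamma_{_1}v}_{\Gamma_1}$, after which the $\lambda^{-1/2}$ decay of $\sN$ from Proposition~\ref{prop:map_neg_par_psdo} and the trace theorem finish the estimate. The only noteworthy difference is in how you control $\gamma_{_1}u$: the paper simply invokes the trace theorem on $u\in H^2$, whereas you go through the explicit representation $\gamma_{_1}u=\sD^{-1}\gamma_{_1}B^{-1}f$ and the uniform-in-$\lambda$ boundedness of $\sD^{-1}$; your version makes the $\lambda$-uniformity of that step more transparent, but it is the same argument in substance.
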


\begin{proof}
Compactness is straightforward since the set of compact operators form an algebra and both $\ran(r_{_{\Omega_2}}A^{-1}_{\lambda}e_{_{\Omega_2}})$ and $\ran(B^{-1})$ are compactly embedded in $L^2(\Omega_2)$. The rest of the proof is an adaptation of an idea from {\sc Birman \& Solomyak} \cite[pg 105]{Birman:1980uq}. Take $f,g\in L^2(\Omega_2)$ and set $u=r_{_{\Omega_2}}A^{-1}_{\lambda}e_{_{\Omega_2}}f$ and $v=B^{-1}g$. An integration by parts shows  that
\[(A_{\lambda}u,v)_{L^2(\Omega_2)} =\int\limits_{\Omega_2}A_{\lambda}uv =-\int\limits_{\Omega_2}\lap uv = \int\limits_{\Omega_2}\nabla u\cdot\nabla v - \int\limits_{\partial\Omega_2}\PD{u}{n}v\]
and
\[(u,Bv)_{L^2(\Omega_2)}  =-\int\limits_{\Omega_2}u\lap v = \int\limits_{\Omega_2}\nabla u\cdot\nabla v - \int\limits_{\partial\Omega_2}\PD{v}{n}u.\]
Using the fact that 
\[\at{v}{\Gamma_1}=\at{\PD{v}{n}}{\Gamma}=\at{\PD{u}{n}}{\Gamma} =0,\]
the integrals over $\Gamma$ vanish and we get 
\[(A_{\lambda}u,v)_{L^2(\Omega_2)} -(u,Bv)_{L^2(\Omega_2)} =\dpair{\gamma_{_0}u}{\gamma_{_1}v}_{\Gamma_1},\]
or, using the definition of $f$ and $g$, that
\[(f,B^{-1}g)_{L^2(\Omega_2)}-(r_{_{\Omega_2}}A^{-1}_{\lambda}e_{_{\Omega_2}}f,g)_{L^2(\Omega_2)} = \dpair{\gamma_{_0}u}{\gamma_{_1}v}_{\Gamma_1}.\]
Since $B$ is clearly symmetric (in fact it is self adjoint), $B^{-1}$ is symmetric. Now $e_{_{\Omega_2}}f$ has support in $\Omega_2$ and by
 Corollary \ref{cor:main_result_corr} we have that $\gamma_{_0}u=\sN\gamma_{_1}u$ which in turn implies that
\[((r_{_{\Omega_2}}A^{-1}_{\lambda}e_{_{\Omega_2}}-B^{-1})f,g) =  -\dpair{\sN\gamma_{_1}u}{\gamma_{_1}v}_{\Gamma_1}.\]
From this, it follows
\begin{align*}
|((r_{_{\Omega_2}}A^{-1}_{\lambda}e_{_{\Omega_2}}-B^{-1})f,g)|&\leq \bnorm{\mathscr{N}_{\lambda}\gamma_{_1}u}_{L^2(\Gamma_1)}
\bnorm{\gamma_{_1}v}_{L^2(\Gamma_1)}.
\end{align*}
Using the fact $v\in H^2(\Omega_2)$, the trace theorem and the operator bounds on $\sN$ we see that
\[|((r_{_{\Omega_2}}A^{-1}_{\lambda}e_{_{\Omega_2}}-B^{-1})f,g)|\leq\frac{C}{\sqrt{\lambda}}\norm{g}_{L^2(\Omega_2)}\norm{f}_{L^2(\Omega_2)}.\]
The Riesz representation theorem shows, for some positive constant $C$,
\[\norm{r_{_{\Omega_2}}A^{-1}_{\lambda}e_{_{\Omega_2}}-B^{-1}}_{op}\leq\frac{C}{\sqrt{\lambda}},\]
which completes the proof.
\end{proof}

We single out the following result which the above proof and much of what is to follow rests on:
\begin{lemma}[Green's Formula]\label{lem:green_formula}
Let $f,g\in L^2(\Omega_2)$ and set $u=r_{_{\Omega_2}}A^{-1}_{\lambda}e_{_{\Omega_2}}f$ and $v=B^{-1}g$. Then the following equivalent
formulae hold
\begin{enumerate}[$(i).$]
\item $(A_{\lambda}u,v)_{L^2(\Omega_2)} -(u,Bv)_{L^2(\Omega_2)} =\dpair{\gamma_{_0}u}{\gamma_{_1}v}_{\Gamma_1}$
\item $(f,B^{-1}g)-(r_{_{\Omega_2}}A^{-1}_{\lambda}e_{_{\Omega_2}}f,g) = \dpair{\gamma_{_0}u}{\gamma_{_1}v}_{\Gamma_1}$
\item $((r_{_{\Omega_2}}A^{-1}_{\lambda}e_{_{\Omega_2}}-B^{-1})f,g) =  -\dpair{\sN\gamma_{_1}u}{\gamma_{_1}v}_{\Gamma_1}$
\item $((r_{_{\Omega_2}}A^{-1}_{\lambda}e_{_{\Omega_2}}-B^{-1})f,g) =\dpair{\sW\gamma_{_1}B^{-1}f}{\gamma_{_1}B^{-1}g}_{\Gamma_1}$
\end{enumerate}
\end{lemma}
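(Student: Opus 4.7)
My plan is to derive (i) by direct computation and then obtain (ii), (iii), (iv) in turn as consequences, since the essential work of the lemma has effectively already been accomplished in the proof of Theorem 3.1 above; the lemma is really a restatement that highlights four useful reformulations of a single Green-type identity.

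For (i), I would integrate by parts twice in $\Omega_2$. Since $\mathbf{1}_{\Omega_1}(x)=0$ on $\Omega_2$ we have $A_\lambda u=-\Delta u$ there, so
\[
(A_\lambda u,v)_{L^2(\Omega_2)}-(u,Bv)_{L^2(\Omega_2)}=\int_{\partial\Omega_2}\bigl(u\,\partial_\nu v-v\,\partial_\nu u\bigr)\,dS.
\]
The boundary $\partial\Omega_2$ is $\Gamma\cup\Gamma_1$. On $\Gamma$, the Neumann conditions $\partial_\nu u|_\Gamma=\partial_\nu v|_\Gamma=0$ kill the integrand. On $\Gamma_1$, the Dirichlet condition $\gamma_{_0}v=0$ kills the term $v\,\partial_\nu u$, leaving only $\int_{\Gamma_1}u\,\partial_\nu v\,dS=\dpair{\gamma_{_0}u}{\gamma_{_1}v}_{\Gamma_1}$, which is (i). The sign convention for $\nu$ on $\Gamma_1$ is the same one used throughout the paper in the transmission problem setup, so I just need to check consistency with the definition of $\gamma_{_1}$ on $\Gamma_1$ from the $\Omega_2$ side.

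For (ii), I would substitute the relations $A_\lambda u=f$ on $\Omega_2$ (valid because $e_{_{\Omega_2}}f$ vanishes in $\Omega_1$, so $A_\lambda^{-1}e_{_{\Omega_2}}f$ satisfies $-\Delta(\cdot)=f$ there) and $Bv=g$ into the left side of (i). This directly converts $(A_\lambda u,v)$ into $(f,B^{-1}g)$ and $(u,Bv)$ into $(r_{_{\Omega_2}}A_\lambda^{-1}e_{_{\Omega_2}}f,g)$, yielding (ii).

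For (iii), I would use the self-adjointness of $B^{-1}$ (which follows from $B$ being symmetric with compact inverse) to rewrite $(f,B^{-1}g)=(B^{-1}f,g)$, so that the left side of (ii) becomes $-((r_{_{\Omega_2}}A_\lambda^{-1}e_{_{\Omega_2}}-B^{-1})f,g)$. Then Corollary \ref{cor:main_result_corr} gives $\gamma_{_0}u=\sN\gamma_{_1}u$ since the source $e_{_{\Omega_2}}f$ is supported in $\Omega_2$, and substituting this on the right produces (iii). Finally, (iv) follows by invoking the second half of Corollary \ref{cor:main_result_corr}, namely $\gamma_{_1}u=\sD^{-1}(\gamma_{_1}B^{-1}f)$, and the definition $\sW=-\sN\sD^{-1}$, together with the identification $\gamma_{_1}v=\gamma_{_1}B^{-1}g$.

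The only mildly delicate point is the regularity needed to justify the integration by parts in (i): strictly speaking, $u=r_{_{\Omega_2}}A_\lambda^{-1}e_{_{\Omega_2}}f$ need not lie in $H^2(\Omega_2)$ because $A_\lambda^{-1}e_{_{\Omega_2}}f\in H^2(\Omega)$ but its restriction to $\Omega_2$ only inherits interior $H^2$ regularity together with $H^{3/2}$ and $H^{1/2}$ traces on $\Gamma_1$ of its Dirichlet and Neumann data, respectively. However, since $-\Delta u=f\in L^2(\Omega_2)$ with a Neumann condition on $\Gamma$, elliptic regularity up to $\Gamma_1$ gives $u\in H^{3/2}(\Omega_2)$ and both trace pairings $\dpair{\gamma_{_0}u}{\gamma_{_1}v}_{\Gamma_1}$ and $\dpair{\gamma_{_1}u}{\gamma_{_0}v}_{\Gamma_1}$ are well-defined via the $H^{1/2}$--$H^{-1/2}$ duality, so Green's identity extends by density from smooth approximants of $u$ and $v$. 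This technicality is the one step that requires actual justification beyond the symbolic manipulations; everything else is essentially algebra once (i) is in hand.
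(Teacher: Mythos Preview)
Your argument is correct and follows exactly the route the paper takes: the lemma is explicitly ``singled out'' from the proof of Theorem~3.1, and your steps (i)--(iv) reproduce that derivation, including the appeal to Corollary~\ref{cor:main_result_corr} and the definition $\sW=-\sN\sD^{-1}$.

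One small correction to your regularity discussion: your worry that $u\notin H^2(\Omega_2)$ is unfounded. By the definition of $\dom(A_\lambda)$ in \eqref{eqn:dom_Doi} we have $A_\lambda^{-1}e_{_{\Omega_2}}f\in H^2(\Omega)$, and restriction $r_{_{\Omega_2}}:H^2(\Omega)\to H^2(\Omega_2)$ is bounded (as the paper notes just before Theorem~3.1). Hence $u\in H^2(\Omega_2)$ outright, and since also $v\in H^2(\Omega_2)$, the integration by parts in (i) is justified directly without any density argument.
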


An easy corollary of the theorem and its proof is 
 \begin{corollary}\label{cor:bounded_spectrum}
 The spectrum $\sigma(r_{_{\Omega_2}}A^{-1}_{\lambda}e_{_{\Omega_2}}-B^{-1})$ is real and discrete and is contained in a closed interval 
 $[-\varepsilon,\varepsilon]$ where $\varepsilon=\mathcal{O}(\lambda^{-\frac{1}{2}})$.
 \end{corollary}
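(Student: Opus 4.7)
The plan is to extract three facts in sequence: self-adjointness (giving reality of the spectrum), compactness (giving discreteness), and the operator-norm bound from Theorem \ref{thm:oper_conv_diff} combined with self-adjointness (giving the explicit interval). All three ingredients are already essentially assembled, so the proof will amount to bookkeeping rather than new analysis.

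First I would verify that the difference operator $T_\lambda := r_{_{\Omega_2}}A^{-1}_{\lambda}e_{_{\Omega_2}}-B^{-1}$ is self-adjoint on $L^2(\Omega_2)$. Since $B$ is self-adjoint with compact resolvent (this was used in setting up $\sK$), $B^{-1}$ is self-adjoint. For the other term, $A_\lambda$ is self-adjoint on $L^2(\Omega)$ by its quadratic-form definition, so $A_\lambda^{-1}$ is self-adjoint as well; combined with the adjoint relation $r_{_{\Omega_2}}^{*}=e_{_{\Omega_2}}$ established right before the statement of Theorem \ref{thm:oper_conv_diff}, one gets
\[
\bigl(r_{_{\Omega_2}}A^{-1}_{\lambda}e_{_{\Omega_2}}\bigr)^{*}=e_{_{\Omega_2}}^{*}(A^{-1}_{\lambda})^{*}r_{_{\Omega_2}}^{*}=r_{_{\Omega_2}}A^{-1}_{\lambda}e_{_{\Omega_2}}.
\]
Hence $T_\lambda=T_\lambda^{*}$, and by the spectral theorem $\sigma(T_\lambda)\subset\R$. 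An alternative symmetry check, if desired, is to read off $(T_\lambda f,g)=(f,T_\lambda g)$ directly from formula $(iv)$ of Lemma \ref{lem:green_formula}, since $\sW$ is self-adjoint modulo a regularizer as noted at the end of \S 2.6.

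Next, discreteness of $\sigma(T_\lambda)$ (away from $0$) follows from Theorem \ref{thm:oper_conv_diff}, where $T_\lambda$ was shown to be compact. A compact self-adjoint operator on a separable Hilbert space has a purely discrete spectrum with $0$ as the only possible accumulation point, so $\sigma(T_\lambda)$ consists of real eigenvalues of finite multiplicity accumulating only at $0$.

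Finally, for the interval bound, I would invoke the standard equality
\[
\mathrm{spr}(T_\lambda)=\|T_\lambda\|_{op}
\]
valid for any bounded self-adjoint operator, where $\mathrm{spr}$ denotes the spectral radius. Combined with the operator-norm estimate $\|T_\lambda\|_{op}=\mathcal O(\lambda^{-1/2})$ of Theorem \ref{thm:oper_conv_diff}, this gives $\sigma(T_\lambda)\subset[-\varepsilon,\varepsilon]$ with $\varepsilon=\mathcal O(\lambda^{-1/2})$, as claimed. There is no real obstacle in this proof; the only mildly delicate point is the self-adjointness step, since it relies on the fact that $r_{_{\Omega_2}}$ and $e_{_{\Omega_2}}$ form an adjoint pair with respect to the $L^2$ inner product on $\Omega$ and $\Omega_2$, which has already been verified in the preceding paragraphs.
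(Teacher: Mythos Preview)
Your proposal is correct and follows essentially the same route as the paper's own proof: compactness and symmetry of $E_\lambda$ (the latter via the adjoint relation $r_{_{\Omega_2}}^{*}=e_{_{\Omega_2}}$ and self-adjointness of $A_\lambda^{-1}$, $B^{-1}$), together with the operator-norm bound of Theorem~\ref{thm:oper_conv_diff} and the identity between operator norm and spectral radius for self-adjoint operators. You spell out the self-adjointness step in slightly more detail than the paper, which simply points back to ``Theorem~\ref{thm:oper_conv_diff} and its proof'', but the argument is the same.
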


\begin{proof}
 Let $E_\lambda:=r_{_{\Omega_2}}A^{-1}_{\lambda}e_{_{\Omega_2}}-B^{-1}$. Theorem \ref{thm:oper_conv_diff} and its proof
 show that $E_\lambda$ is compact and symmetric and gives a bound for the operator norm. The result follows from the fact that 
the operator norm furnishes a bound for the spectral radius of a bounded operator.
\end{proof}

\subsection{Estimates for the Spectral Counting Function}
Our next application will be to refine the corollary above by obtaining estimates on the singular values/eigenvalues of $r_{_{\Omega_2}}A^{-1}_{\lambda}e_{_{\Omega_2}}-B^{-1}$.  We first recall some well known definitions which can be found for instance in \cite[Appendix 1]{Birman:1980uq}.

Let $T$ be a compact operator in a Hilbert space $H$, which we always assume to be separable. Let $s_k(T)$ be the eigenvalues of the non--negative compact operator $\sqrt{T^{*}T}$ written in non increasing order. If $T$ is positive and self adjoint, we will sometimes write $\mu_k(T)$ as the (necessarily) positive eigenvalues of $T$ also written with multiplicities and in non--increasing order. As in the Introduction we define the distribution or counting function
\begin{equation*}
N(\mu; T) =\sum_{\mu_k(T)>\mu}1, \quad \mu>0,
\end{equation*}
which counts the number of eigenvalues of $T$ which are greater than $\mu$. The rest of this subsection is devoted to deriving precise asymptotics for $N(\mu;(r_{_{\Omega_2}}A^{-1}_{\lambda}e_{_{\Omega_2}}-B^{-1}))$. We commence with the following result which is easy to deduce from Theorem \ref{thm:oper_conv_diff} and Corollary \ref{cor:bounded_spectrum} and refines the latter result:

\begin{corollary}
Let $\mu>0$ be fixed. Then there exists a $\lambda_0>0$ which depends on $\mu$ and $\Omega_2$ such that for $\lambda\geq\lambda_0$ we have $N(\mu;(r_{_{\Omega_2}}A^{-1}_{\lambda}e_{_{\Omega_2}}-B^{-1}))  =0$.
\end{corollary}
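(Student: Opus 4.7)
The plan is to deduce this as an essentially immediate corollary of Theorem \ref{thm:oper_conv_diff} and Corollary \ref{cor:bounded_spectrum}, invoking the elementary fact that the singular values (respectively the absolute values of the eigenvalues, in the self-adjoint case) of a compact operator are dominated by its operator norm.

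First, I would appeal to Theorem \ref{thm:oper_conv_diff} to obtain a constant $C > 0$, depending only on the geometry of $\Omega_2$ (through the trace theorem and the operator bounds on $\sN$), such that for all sufficiently large $\lambda$,
\[
\bnorm{E_\lambda}_{op} \leq \frac{C}{\sqrt{\lambda}}, \qquad E_\lambda := r_{_{\Omega_2}} A^{-1}_{\lambda} e_{_{\Omega_2}} - B^{-1}.
\]
Next, I would recall from Corollary \ref{cor:bounded_spectrum} that $E_\lambda$ is compact and symmetric, so its eigenvalues $\mu_k(E_\lambda)$ are real and satisfy $|\mu_k(E_\lambda)| \leq \bnorm{E_\lambda}_{op}$ for every $k$; equivalently, the singular values $s_k(E_\lambda)$ are bounded above by the operator norm.

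Finally, given $\mu > 0$, I would simply set $\lambda_0 := \max\{1,(C/\mu)^2\}$. Then for every $\lambda \geq \lambda_0$ one has $\bnorm{E_\lambda}_{op} \leq C\lambda^{-1/2} \leq \mu$, so no eigenvalue of $E_\lambda$ exceeds $\mu$ in absolute value, and consequently
\[
N(\mu; E_\lambda) = \#\{k : \mu_k(E_\lambda) > \mu\} = 0.
\]
There is no real obstacle here; the only point requiring minimal care is that the constant $C$ above is indeed intrinsic to $\Omega_2$ (it does not depend on $\lambda$) which is built into the statement of Theorem \ref{thm:oper_conv_diff}, and hence $\lambda_0$ depends only on $\mu$ and $\Omega_2$ as claimed.
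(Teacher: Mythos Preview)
Your proposal is correct and follows exactly the route the paper indicates: it invokes Theorem \ref{thm:oper_conv_diff} for the $\mathcal{O}(\lambda^{-1/2})$ operator-norm bound and Corollary \ref{cor:bounded_spectrum} for compactness and self-adjointness, then observes that no eigenvalue can exceed the operator norm. The paper itself gives no further detail beyond stating that the result is ``easy to deduce'' from these two results, so your write-up is in fact more explicit than the original.
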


Now for $f\in L^2(\Omega_2)$, the Green's formula, i.e. Lemma \ref{lem:green_formula},  and \eqref{eqn:bound_soln} show that
\[((r_{_{\Omega_2}}A^{-1}_{\lambda}e_{_{\Omega_2}}-B^{-1})f,f)_{_{\Omega_2}} =  \dpair{\sW(\gamma_1B^{-1}f)}{ \gamma_1B^{-1}f}_{_{\Gamma_1}}.\]
We may form the Rayleigh quotient and we see, for any $f\neq 0$,
\begin{align*}
\frac{((r_{_{\Omega_2}}A^{-1}_{\lambda}e_{_{\Omega_2}}-B^{-1})f,f)_{_{\Omega_2}}}{(f,f)} &= \frac{\dpair{\sW(\gamma_1B^{-1}f)}{ \gamma_1B^{-1}f}_{_{\Gamma_1}}}{(f,f)}.
\end{align*}

Defining $S=\gamma_{_1}B^{-1}$ we see that
\begin{align*}
\frac{((r_{_{\Omega_2}}A^{-1}_{\lambda}e_{_{\Omega_2}}-B^{-1})f,f)_{_{\Omega_2}}}{(f,f)}&=\frac{\dpair{Sf}{Sf}}{(f,f)}\frac{\dpair{\sW Sf}{Sf}_{_{\Gamma_1}}}{\dpair{Sf}{Sf}}\\
&\leq\norm{S}^2_{op}\frac{\dpair{\sW Sf}{Sf}_{_{\Gamma_1}}}{\dpair{Sf}{Sf}}
\end{align*}
where, as a consequence of the trace theorem, $S$ is a bounded map from $L^2(\Omega_2)$ into $ H^{\frac{1}{2}}(\Gamma_1)$. We have already observed that $\sW$ is elliptic with a positive real principal symbol. The G$\mathring{\text{a}}$rding inequality implies that $\sW$ is  lower semi-bounded and modifying $\sW$ if necessary, we assume from now on that it is positive.
The above inequality leads to the following important result:

\begin{theorem}\label{thm:basic_spec_func_ineq}
The following inequality holds for the spectral counting function:
\begin{equation}\label{eqn:spectral_func_ineq_for_diff}
N(\mu;(r_{_{\Omega_2}}A^{-1}_{\lambda}e_{_{\Omega_2}}-B^{-1})) \leq N(\norm{S}^{-2}_{op}\mu;\mathscr{W}_\lambda).
\end{equation}
\end{theorem}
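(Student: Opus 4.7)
The plan is to prove the inequality using the min-max (Courant-Fischer) characterization of the eigenvalue counting function, together with the Rayleigh-quotient bound derived immediately before the theorem statement. Both $E_\lambda := r_{_{\Omega_2}}A^{-1}_{\lambda}e_{_{\Omega_2}}-B^{-1}$ and $\sW$ are compact self-adjoint operators; after the Gårding modification mentioned in the paragraph preceding the theorem, $\sW$ is positive, and we interpret $N(\mu;\cdot)$ as the usual counting function for eigenvalues exceeding $\mu$.

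First, I would set $k := N(\mu; E_\lambda)$ and invoke min-max to select a $k$-dimensional subspace $V \subset L^2(\Omega_2)$ with the property that
\[
 \frac{(E_\lambda f,f)_{L^2(\Omega_2)}}{(f,f)_{L^2(\Omega_2)}} > \mu, \qquad f \in V\setminus\{0\}.
\]
The next step is to verify that $S = \gamma_{_1} B^{-1}$ is injective on $V$. Indeed, if $Sf = 0$ for some $f \in V$, then part $(iv)$ of Lemma \ref{lem:green_formula} yields
\[
 (E_\lambda f, f)_{L^2(\Omega_2)} = \dpair{\sW Sf}{Sf}_{\Gamma_1} = 0,
\]
which contradicts the strict inequality above (assuming $\mu > 0$; the case $\mu \le 0$ is either vacuous or follows by a limiting argument). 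Consequently $SV \subset L^2(\Gamma_1)$ is also a subspace of dimension $k$.

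Next, I would transfer the Rayleigh-quotient estimate: for any $f \in V\setminus\{0\}$, setting $g := Sf \in SV\setminus\{0\}$, the inequality derived just above the theorem gives
\[
 \mu < \frac{(E_\lambda f,f)_{L^2(\Omega_2)}}{(f,f)_{L^2(\Omega_2)}} \le \norm{S}^2_{op}\,\frac{\dpair{\sW g}{g}_{\Gamma_1}}{\dpair{g}{g}_{\Gamma_1}},
\]
so that $\dpair{\sW g}{g}_{\Gamma_1} > \norm{S}^{-2}_{op}\mu\,\dpair{g}{g}_{\Gamma_1}$ on all of $SV \setminus \{0\}$. Applying min-max in the reverse direction to the positive compact operator $\sW$ on $L^2(\Gamma_1)$, the existence of such a $k$-dimensional subspace forces $N(\norm{S}^{-2}_{op}\mu; \sW) \ge k = N(\mu; E_\lambda)$, which is the desired inequality.

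The only step that requires real care is the injectivity of $S$ on $V$; without it, one cannot conclude $\dim SV = k$. Everything else is an application of standard min-max together with the Green's formula of Lemma \ref{lem:green_formula}. The self-adjointness of $E_\lambda$ (inherited from $A_\lambda^{-1}$ being self-adjoint and $r_{_{\Omega_2}} = e_{_{\Omega_2}}^*$) and compactness established in Theorem \ref{thm:oper_conv_diff} ensure min-max applies. Boundedness of $S : L^2(\Omega_2) \to H^{\frac{1}{2}}(\Gamma_1) \hookrightarrow L^2(\Gamma_1)$ from the trace theorem makes the operator norm $\norm{S}_{op}$ finite and the inequality meaningful.
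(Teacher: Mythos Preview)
Your argument is correct and is essentially the same as the paper's: the paper invokes a lemma of Birman--Solomyak (stated there as Lemma~\ref{lem:birman_lemma}) together with Lemma~\ref{lem:density_of_S}, and your min--max computation is precisely an inline proof of that lemma in this setting, with the injectivity step playing the role of the hypothesis ``$(T_1 u,u)=0$ for $u\in\kers(S)$''. The only cosmetic difference is that the paper cites the abstract result rather than unpacking the Courant--Fischer argument explicitly.
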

Theorem \ref{thm:basic_spec_func_ineq} is important because it allows us to reduce the study of the spectral counting function for the difference,
$r_{_{\Omega_2}}A^{-1}_{\lambda}e_{_{\Omega_2}}-B^{-1}$, to the study of the spectral function of a pseudodifferential operator on a closed compact manifold.
 The proof of Theorem \ref{thm:basic_spec_func_ineq} rests on several lemmas the first of which is
 
\begin{lemma}[See Lemma $1.15$ in \cite{Birman:1980uq}]\label{lem:birman_lemma}
Let $\mathcal{H}_i$, $i=1,2$ be two separable Hilbert spaces with norms $\norm{\cdot}_i$ and inner products $(\cdot,\cdot)_i$. Let $T_i$ be  linear compact self-adjoint maps acting in 
$\mathcal{H}_i$. Let $S:\mathcal{H}_1\rightarrow \mathcal{H}_2$ be a continuous linear map such that $(T_1u,u) =0$ for $u\in\kers(S)$. If for some real $\alpha>0$ and all $u\in \mathcal{H}_1$ such that $(T_1u,u) >0$ we have
\begin{equation*}
\frac{(T_1u,u)_1}{(u,u)_1} \leq \alpha\frac{(T_2Su,Su)_2}{(Su,Su)_2}
\end{equation*}
Then for $\mu >0$ we have that $N(\mu;T_1)\leq N(\alpha^{-1}\mu;T_2)$.
\end{lemma}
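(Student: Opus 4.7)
The plan is to prove this via the min-max (Courant--Fischer) characterization of the counting function for compact self-adjoint operators. Recall that for such a $T$ and $\mu>0$, $N(\mu;T)\geq k$ if and only if there exists a $k$-dimensional subspace $V$ on which the Rayleigh quotient strictly exceeds $\mu$; equivalently,
\[
N(\mu;T)=\max\bigl\{\dim V : (Tu,u)>\mu(u,u)\text{ for all }u\in V\setminus\{0\}\bigr\}.
\]
This reduces everything to a subspace-comparison argument.

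First, I would set $N=N(\mu;T_1)$ and select a subspace $V\subset\mathcal{H}_1$ of dimension $N$ on which $(T_1 u,u)_1>\mu(u,u)_1$ for every nonzero $u$. The first key step is to verify that $S$ is injective on $V$: if $u\in V\cap\kers(S)$ were nonzero, then on the one hand $(T_1u,u)_1>\mu(u,u)_1>0$, while on the other hand the standing hypothesis that $(T_1u,u)_1=0$ on $\kers(S)$ forces $(T_1u,u)_1=0$, a contradiction. Hence $S$ restricted to $V$ is injective and $W:=S(V)\subset\mathcal{H}_2$ is an $N$-dimensional subspace.

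Next, I would transfer the Rayleigh-quotient bound from $V$ to $W$. For any $u\in V\setminus\{0\}$ we have $(T_1u,u)_1>0$, so the hypothesis applies and gives
\[
\mu<\frac{(T_1u,u)_1}{(u,u)_1}\leq\alpha\,\frac{(T_2 Su,Su)_2}{(Su,Su)_2}.
\]
Writing $w=Su$, this says $(T_2w,w)_2>\alpha^{-1}\mu\,(w,w)_2$ for every nonzero $w\in W$. By the min-max characterization applied to $T_2$, this means $N(\alpha^{-1}\mu;T_2)\geq\dim W=N=N(\mu;T_1)$, which is the desired inequality.

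The only step that requires any real care is the injectivity of $S|_V$, which is where the seemingly strange hypothesis $(T_1u,u)_1=0$ on $\kers(S)$ enters; everything else is a direct transcription of the min-max principle. No technical obstacle is expected, as $T_1$ and $T_2$ are assumed self-adjoint and compact, so both min-max characterizations are available verbatim, and the positivity cutoff $\mu>0$ guarantees we only work with the subspaces on which $T_i$ acts positively.
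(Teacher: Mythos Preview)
Your argument is correct and is precisely the standard min--max proof of this variational comparison lemma. Note, however, that the paper does not supply its own proof of this statement: the lemma is quoted from Birman--Solomyak (Lemma~1.15 in \cite{Birman:1980uq}) and is used as a black box, so there is no ``paper's proof'' to compare against. Your write-up would serve perfectly well as a self-contained justification in place of the external citation.
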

 
We need the next result which guarantees that the operator $S=\gamma_{_1}B^{-1}$ satisfies the hypothesis of Lemma \ref{lem:birman_lemma}:

\begin{lemma}\label{lem:density_of_S}
Let $S=\gamma_{_1}B^{-1}$. Then it holds that $S:L^2(\Omega_2)\to L^2(\Gamma_1)$ is compact with dense range. Moreover, $((r_{_{\Omega_2}}A^{-1}_{\lambda}e_{_{\Omega_2}}-B^{-1})f,f)=0$ for $f\in\kers(S)$, where \[\kers(S) =\{f\in L^2(\Omega_2):\exists\, u  \text{\,\,satisfying\,\,} -\lap u=f, \text{\,and\,\,}\gamma_{_0}u= \gamma_{_1}u=0 \}.\]
\end{lemma}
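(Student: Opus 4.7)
\textbf{Proof plan for Lemma~\ref{lem:density_of_S}.} The plan is to handle the three assertions in order, each reducing to standard facts about $B$ together with the Green's formula already in hand.

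First, I would establish compactness and the kernel description by factoring
\[ S:\; L^2(\Omega_2) \xrightarrow{B^{-1}} \mathrm{Dom}(B) \hookrightarrow H^2(\Omega_2) \xrightarrow{\gamma_{_1}} H^{\frac{1}{2}}(\Gamma_1) \hookrightarrow L^2(\Gamma_1). \]
The first arrow is bounded because $B$ is an isomorphism from its domain onto $L^2(\Omega_2)$ and its domain sits continuously in $H^2(\Omega_2)$ by elliptic regularity for the mixed Dirichlet--Neumann realization $B$; the second is the trace theorem (Theorem~\ref{thm:trace_thm}) transported to $\Gamma_1$ by local charts; the last inclusion is compact by Rellich on the closed manifold $\Gamma_1$. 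Compactness of $S$ follows. For the kernel, $Sf = 0$ iff $u := B^{-1}f$ satisfies $\gamma_{_1} u = 0$, and such $u$ automatically has $-\lap u = f$ and $\gamma_{_0} u = 0$ on $\Gamma_1$ (the Neumann condition on $\Gamma$ being implicit in $u \in \mathrm{Dom}(B)$); this recovers precisely the set displayed in the statement.

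Next, for the vanishing of the quadratic form on $\ker(S)$, I would invoke Green's formula in form $(iv)$ of Lemma~\ref{lem:green_formula} with $g = f$:
\[ ((r_{_{\Omega_2}} A_\lambda^{-1} e_{_{\Omega_2}} - B^{-1})f, f)_{L^2(\Omega_2)} \;=\; \dpair{\sW Sf}{Sf}_{\Gamma_1}, \]
which is zero whenever $Sf = 0$.

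Finally, for density of the range I would exhibit an explicit preimage of every smooth $\varphi$ on $\Gamma_1$. Pick a tubular neighborhood $U$ of $\Gamma_1$ in $\Omega_2$ with coordinates $(y', y_n)$ trivializing $\Gamma_1 = \{y_n = 0\}$, choose a cutoff $\chi(y_n)$ equal to $1$ near $y_n = 0$ and supported away from $\Gamma$, and set $v(y', y_n) := y_n\, \chi(y_n)\, \varphi(y')$, extended by $0$ outside $U$. Then $v \in H^2(\Omega_2) \cap C^\infty$, with $\gamma_{_0} v = 0$ and $\gamma_{_1} v = \varphi$ on $\Gamma_1$, and $v$ vanishes near $\Gamma$, so $v \in \mathrm{Dom}(B)$; setting $f := -\lap v \in L^2(\Omega_2)$ gives $Sf = \gamma_{_1} v = \varphi$. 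Density of $C^\infty(\Gamma_1)$ in $L^2(\Gamma_1)$ finishes the argument. The only mild subtlety is the kernel characterization, where one must keep in mind that $B^{-1}$ silently enforces the Neumann condition on $\Gamma$ along with the Dirichlet condition on $\Gamma_1$; every other step is a direct appeal to trace theory, elliptic regularity for $B$, or the Green's formula already established.
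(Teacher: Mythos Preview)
Your proposal is correct and follows essentially the same route as the paper: compactness via the factorization through $H^{1/2}(\Gamma_1)$ and Rellich, density by the explicit extension $y_n\,\chi(y_n)\,\varphi(y')$ (the paper does this via a partition of unity rather than a single tubular neighborhood, but the construction is identical), and vanishing on $\ker(S)$ by the Green's formula. The paper does not spell out the kernel characterization as you do, but otherwise the arguments match.
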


\begin{proof}
We establish the Lemma by showing that $S$ is a surjection onto $H^{\frac{1}{2}}(\Gamma_1)$. The result then follows because $H^{\frac{1}{2}}(\Gamma_1)$ is dense in $ L^2(\Gamma_1)$ and is compactly embedded by Rellich's theorem. As $B$ is an isomorphism, it suffices to show that for $\phi\in\mathcal{D}(\Gamma_1)$ we can find $\psi\in \dom(B)$ such that $\gamma_{_1}\psi=\phi$. Since for $f=B\psi$ this would imply that $Sf =SB\psi =\gamma_{_1}(B^{-1}B\psi) =\phi$.
Using a  partition of unity we can turn this into a local problem for a function $\phi_l\in\mathcal{D}(\Gamma_1\cap O_l)$ where $O_l$ is a coordinate patch of $\Gamma_1$ in $\Omega_2$. Identifying $\Gamma_1\cap O_l$ with $x_n=0$ in $\R^n$ we can simply choose
\[\psi_l(x',x_n) = x_n\rho(x_n)\phi_l(x')\]
where $\rho(t)\in C^{\infty}_c(\R)$ is identically one in a small neighborhood of the origin.
Finally, the Green's formula, i.e. Lemma \ref{lem:green_formula}, establishes that $((r_{_{\Omega_2}}A^{-1}_{\lambda}e_{_{\Omega_2}}-B^{-1})f,f)=0$ for $f\in\kers(S)$.
\end{proof}
\begin{remark}

We could also just appeal directly to the trace theorem here and the expert reader will recognize that the above proof 
essentially does that. 
\end{remark}
With these results at our disposal,  we turn to the
\begin{proof}[Proof of Theorem \ref{thm:basic_spec_func_ineq}]
As we have already seen, the following inequality holds
\[\frac{((r_{_{\Omega_2}}A^{-1}_{\lambda}e_{_{\Omega_2}}-B^{-1})f,f)_{_{\Omega_2}}}{(f,f)}\leq\norm{S}^2_{op}\frac{\dpair{\sW Sf}{Sf}_{_{\Gamma_1}}}{\dpair{Sf}{Sf}}.\]
With the Hilbert spaces $\mathcal{H}_1:= L^2(\Omega_2)$, $\mathcal{H}_2 :=L^2(\Gamma_1)$; the operators $T_1:=(r_{_{\Omega_2}}A^{-1}_{\lambda}e_{_{\Omega_2}}-B^{-1})$, $T_2=\mathscr{W}_\lambda$ and $S:=\gamma_{_1}B^{-1}$, a direct application of Lemma \ref{lem:birman_lemma} shows that
$N(\mu;(r_{_{\Omega_2}}A^{-1}_{\lambda}e_{_{\Omega_2}}-B^{-1})) \leq N(\norm{S}^{-2}_{op}\mu;\mathscr{W}_\lambda)$.
\end{proof}

As mentioned previously, Theorem \ref{thm:basic_spec_func_ineq} reduces the study of the spectral function of the difference of the operators in the interior to that of the operator $\sW$ on the boundary, $\Gamma_1$ and we now turn our analysis to $\sW$. The following well known result, specialized to our particular situation, is known as the Weyl asymptotic formula. The statement which follows is modified from the one in {\sc H\"ormander} \cite{Hormander:1968kx}:
 
\begin{proposition}
Let  $x'\in \Gamma_1$ and for $\mu>0$ define \[B_{x'}(\mu) =\{\xi'\in T^{*}_{x'}(\Gamma_1):\,\sigma_{-1}(\sW)>\mu\}.\] 
As $\mu\to 0$, it holds that
\[N(\mu;\sW)\sim \frac{1}{(2\pi)^{n-1}}\int\limits_{\Gamma_1}\int\limits_{B_{x'}(\mu)}d\xi'd\sigma_{x'}.\]
where $d\sigma_{x'}$ is the surface measure on $\Gamma$.
\end{proposition}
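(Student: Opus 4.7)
The plan is to reduce the claim to the classical Weyl asymptotic formula for positive-order elliptic pseudodifferential operators on a closed compact manifold, applied to $\sW^{-1}$ rather than $\sW$ directly.

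First I would verify the setup. The interface $\Gamma_1$ is smooth, compact, and without boundary. By Theorem \ref{thm:exist_of_PsDOs} and the discussion preceding the proposition, $\sW$ is elliptic of order $-1$ with a positive real principal symbol (after modification by a smoothing operator, as guaranteed by G\aa rding's inequality). Consequently $\sW$ is a compact, self-adjoint, non-negative operator on $L^2(\Gamma_1)$ whose spectrum consists of eigenvalues $\mu_1\geq \mu_2\geq\cdots\searrow 0$. Modulo its finite-dimensional kernel, which contributes only an $O(1)$ correction to the counting function, the inverse $\sW^{-1}$ is a positive elliptic self-adjoint pseudodifferential operator of order $+1$ on $\Gamma_1$, with principal symbol
\[
a(x',\xi')\;:=\;\Re\tau(x',\xi')-\Re\eta(x',\xi',\lambda)\;=\;\bigl(\sigma_{-1}(\sW)(x',\xi')\bigr)^{-1}.
\]

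Second, I would translate the counting function. Since the nonzero eigenvalues of $\sW^{-1}$ are reciprocals of those of $\sW$, one has up to $O(1)$
\[
N(\mu;\sW) \;=\; \#\{k:\mu_k(\sW)>\mu\} \;=\; \#\{k:\mu_k(\sW^{-1})<1/\mu\}.
\]
I would then invoke the classical Weyl asymptotic formula of \cite{Hormander:1968kx}, applied to the positive elliptic self-adjoint PDO $\sW^{-1}$ of order $+1$ on the closed manifold $\Gamma_1$ of dimension $n-1$, to obtain
\[
\#\{k:\mu_k(\sW^{-1})\leq \Lambda\}\;\sim\;\frac{1}{(2\pi)^{n-1}}\int_{\Gamma_1}\int_{\{a(x',\xi')\leq\Lambda\}} d\xi'\, d\sigma_{x'},\qquad \Lambda\to\infty.
\]
Setting $\Lambda=1/\mu$ and noting that $a=1/\sigma_{-1}(\sW)$, the region $\{a\leq 1/\mu\}$ coincides with $B_{x'}(\mu)$ up to a smooth level set of Lebesgue measure zero, which does not affect the integral. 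The desired asymptotic follows.

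The main obstacle is not the asymptotic formula itself, which is imported as a black box, but the justification of the passage from $\sW$ to $\sW^{-1}$ in the presence of a possibly nontrivial kernel and the regularizing perturbation used to make $\sW$ positive. Both issues are handled by the Ky Fan--Weyl singular value inequalities, which guarantee that finite-rank or sufficiently smoothing perturbations shift the counting function only by a lower order term; this is the standard mechanism by which one reduces spectral asymptotics of negative-order elliptic operators to the classical positive-order Weyl law.
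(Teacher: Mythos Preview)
The paper does not give its own proof of this proposition: it is stated as ``the following well known result'' and attributed to H\"ormander \cite{Hormander:1968kx}. Your sketch is a correct and standard way to recover the negative-order statement from the positive-order Weyl law, by passing to $\sW^{-1}$, invoking H\"ormander's theorem on the closed $(n-1)$-dimensional manifold $\Gamma_1$, and absorbing the finite-rank/smoothing modifications into lower-order terms via Ky Fan--Weyl. So your approach is consistent with what the paper is implicitly citing, and there is nothing to compare on the level of proof strategy.

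One small point worth tightening: when you write $N(\mu;\sW)=\#\{k:\mu_k(\sW^{-1})<1/\mu\}$, make explicit that the discrepancy between this and the quantity $\#\{k:\mu_k(\sW^{-1})\le\Lambda\}$ appearing in H\"ormander's formulation is at most the multiplicity of a single eigenvalue, hence $O(1)$; together with the $O(1)$ correction coming from the kernel, this is swallowed by the leading term, which grows like $\mu^{1-n}$.
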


In order to apply this result, we begin by determining the principal symbol $\sigma_{-1}(\sW)$. Recall the formula \eqref{eqn:weyl_symbl_intro} 
which shows that
\begin{align*}
\sigma_{-1}(\sW)&=\frac{A_{nn}(x')}{\sqrt{A_{nn}\abs{\xi'}^2-|\nabla\chi\cdot\xi'|^2}+\sqrt{A_{nn}(\abs{\xi'}^2+\lambda)-|\nabla\chi\cdot\xi'|^2}}\\
&=\frac{\sqrt{A_{nn}(x')}}{\sqrt{\abs{\xi'}^2-|\hat{\nu}'(x')\cdot\xi'|^2}+\sqrt{(\abs{\xi'}^2+\lambda)-|\hat{\nu}'(x')\cdot\xi'|^2}}
\end{align*}
where 
\[\hat{\nu}(x') = \frac{(-\nabla_{x'}\chi,1)^t}{\sqrt{A_{nn}(x')}}\]
 is the unit normal vector to $\Gamma_1$ at $x'$ written in local coordinates and $\hat{\nu}'(x')$ denotes the first $n-1$ components. We also note that in local coordinates 
 \[d\sigma_{x'}=\sqrt{A_{nn}(x')}dx'\]
  with $dx'$ the usual Lebegue measure on $\R^{n-1}$. 

With the above notation and considerations in mind, we can state and prove 
\begin{theorem}\label{thm:spec_count_func_boundary}
We have that
 \begin{equation*}
N(\mu;\sW)\sim \frac{(4\pi)^{1-n}}{(n-1)}\int\limits_{\Gamma_1}I_{n}(x')\paren{\frac{\sqrt{A_{nn}(x')}}{\mu}-\frac{\lambda\mu}{\sqrt{A_{nn}(x')}}}^{n-1}_{+}d\sigma_{x'}
\end{equation*}
 where $q_{+} =\max\{0,q\}$ and
 \[I_{n}(x'):=\int\limits_{S^{n-2}}\frac{1}{\paren{1-|\hat{\nu}'\cdot\theta'|^2}^{\frac{n-1}{2}}}dS_{\theta'},\]
 with $\theta\in S^{n-2}$ and $dS_{\theta'}$ the measure on $S^{n-2}$.
 \end{theorem}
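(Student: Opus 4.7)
The plan is to apply the Weyl asymptotic formula of the preceding proposition directly to $\sW$. By Theorem \ref{thm:exist_of_PsDOs} and the identification of its principal symbol $(\Re\tau-\Re\eta)^{-1}$ carried out above, $\sW$ is an elliptic pseudodifferential operator of order $-1$ on the closed compact manifold $\Gamma_1$ with real principal symbol, and after the modification made just above it is positive and self-adjoint modulo a smoothing operator. Hence H\"ormander's Weyl law applies (with $\lambda$ held fixed), and the problem reduces to an explicit computation of the Lebesgue measure of $B_{x'}(\mu)\subset T_{x'}^{*}\Gamma_1\cong\R^{n-1}$ followed by integration over $\Gamma_1$ against $d\sigma_{x'}$.

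Set $R=R(\mu,x'):=\sqrt{A_{nn}(x')}/\mu$ and $a:=|\xi'|^2-|\hat{\nu}'(x')\cdot\xi'|^2\geq 0$. Using the formula for $\sigma_{-1}(\sW)$ displayed just above, the inequality $\sigma_{-1}(\sW)>\mu$ rewrites as $\sqrt{a}+\sqrt{a+\lambda}<R$. Since the left-hand side is at least $\sqrt{\lambda}$, the set $B_{x'}(\mu)$ is empty unless $R>\sqrt{\lambda}$, i.e., unless $\mu<\sqrt{A_{nn}/\lambda}$; this threshold is precisely what the $(\cdot)_+$ in the conclusion encodes. When $R>\sqrt{\lambda}$, isolating $\sqrt{a+\lambda}$ and squaring yields
\begin{equation*}
\sqrt{a}\;<\;\frac{R^2-\lambda}{2R}\;=\;\frac{1}{2}\paren{\frac{\sqrt{A_{nn}(x')}}{\mu}-\frac{\lambda\mu}{\sqrt{A_{nn}(x')}}}\;=:\;\rho(\mu,x').
\end{equation*}

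To compute the measure of $B_{x'}(\mu)$, pass to polar coordinates $\xi'=s\theta'$ with $s>0$ and $\theta'\in S^{n-2}$. Then $a=s^2(1-|\hat{\nu}'\cdot\theta'|^2)$; since $|\hat{\nu}'|^2=(A_{nn}-1)/A_{nn}<1$ (because $\hat{\nu}$ is a unit vector with nonzero $n$-th component $1/\sqrt{A_{nn}}$), the factor $1-|\hat{\nu}'\cdot\theta'|^2$ is bounded below by $1/A_{nn}>0$. Hence $B_{x'}(\mu)$ is parametrized by $0\leq s<\rho/\sqrt{1-|\hat{\nu}'\cdot\theta'|^2}$, and integrating the radial Jacobian $s^{n-2}$ produces $\rho^{n-1}/(n-1)$ times $(1-|\hat{\nu}'\cdot\theta'|^2)^{-(n-1)/2}$. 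The angular integration over $S^{n-2}$ then gives exactly the factor $I_n(x')$. Substituting $\rho^{n-1}=2^{-(n-1)}\paren{\sqrt{A_{nn}}/\mu-\lambda\mu/\sqrt{A_{nn}}}^{n-1}_{+}$ into the Weyl formula and collecting constants via $(2\pi)^{-(n-1)}\cdot 2^{-(n-1)}/(n-1)=(4\pi)^{1-n}/(n-1)$ yields the claim.

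The argument is essentially algebraic; the main obstacle is bookkeeping, specifically tracking the threshold $R>\sqrt{\lambda}$ so that it surfaces as $(\cdot)_+$ in the final expression, and verifying that the polar decomposition is non-degenerate (which reduces to the purely geometric fact $|\hat{\nu}'|<1$). Beyond this, no new analytic input is required, since H\"ormander's Weyl law supplies the full leading asymptotic once the principal symbol information and the ellipticity and positivity of $\sW$ are in hand.
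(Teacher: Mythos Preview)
Your proof is correct and follows essentially the same route as the paper: apply H\"ormander's Weyl law, rewrite the superlevel set $\{\sigma_{-1}(\sW)>\mu\}$ as $\sqrt{a}+\sqrt{a+\lambda}<R$, solve to get $\sqrt{a}<\rho$, pass to polar coordinates, and integrate. The only cosmetic difference is that the paper first rescales $\xi'\mapsto\xi'/\sqrt{\lambda}$ (pulling out a factor $\lambda^{(n-1)/2}$) before computing the set, whereas you work directly in the unscaled variable; the two versions of $\rho$ differ by exactly $\sqrt{\lambda}$ and the final expressions coincide. Your bound $1-|\hat\nu'\cdot\theta'|^2\geq 1/A_{nn}$ is in fact a slightly cleaner way to verify the finiteness of $I_n(x')$ than the paper's Lipschitz-constant argument.
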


\begin{proof}
This proof is really a computation and it all boils down to estimates for $B_{x'}(\mu)$. By homogeneity considerations and a change of variables we have that
\[\int\limits_{B_{x'}(\mu)}d\xi'= \lambda^{\frac{n-1}{2}}\int\limits_{\Sigma_{x'}(\mu)}d\xi',\]
where
\[\Sigma_{x'}(\mu)= \left\{\xi':\sqrt{|\xi'|^2-|\hat{\nu}'\cdot\xi'|^2}+\sqrt{|\xi'|^2+1-|\hat{\nu}'\cdot\xi'|^2}<\frac{\sqrt{A_{nn}(x')}}{\sqrt{\lambda}\mu} \right\}.\]
From this, we see that $\Sigma_{x'}(\mu)$ is nonempty if and only if $1\leq\frac{\sqrt{A_{nn}(x')}}{\sqrt{\lambda}\mu}$.
In particular we note that the Lebesgue measure of $\Sigma_{x'}(\mu)$, which we denote by $|\Sigma_{x'}(\mu)|$, for fixed $\mu$ decreases as $\lambda$ increases. A direct computation now shows that
\[\Sigma_{x'}(\mu)=\left\{\xi':|\xi'|<\frac{1}{2}\paren{\frac{\sqrt{A_{nn}(x')}}{\sqrt{\lambda}\mu}-\frac{\sqrt{\lambda}\mu}{\sqrt{A_{nn}(x')}}}\frac{1}{\sqrt{1-|\hat{\nu}'\cdot\theta'|^2}}\right\}.\]
Reverting to polar coordinates we see that
\[\int\limits_{B_{x'}(\mu)}d\xi'= \lambda^{\frac{n-1}{2}}\int\limits_{S^{n-2}}\int\limits_0^{r^*}r^{n-2}\,dr\,d\theta'\]
where $r^*=\frac{\rho(x')}{\sqrt{1-|\hat{\nu}'\cdot\theta'|^2}}$ and $\rho(x')=\frac{1}{2}\paren{\frac{\sqrt{A_{nn}(x')}}{\sqrt{\lambda}\mu}-\frac{\sqrt{\lambda}\mu}{\sqrt{A_{nn}(x')}}}_{+}$. 

Carrying out the integration we obtain
\[\int\limits_{B_{x'}(\mu)}d\xi'=\frac{\lambda^{\frac{n-1}{2}}(\rho(x'))^{n-1}}{n-1}\int\limits_{S^{n-2}}\frac{1}{\paren{1-|\hat{\nu}'\cdot\theta'|^2}^{\frac{n-1}{2}}}dS_{\theta'}.\]
It is easy to verify that the last integral on the right converges. To check this, we let $t=t(x') :=|\hat{\nu}'(x')|$ and we note that $0\leq t<1$. Rotating the sphere so that 
$\hat{\nu}'(x')=t\vec{e}_1$ we see that 
\[I_{n}(x'):=\int\limits_{S^{n-2}}\frac{1}{\paren{1-|\hat{\nu}'\cdot\theta'|^2}^{\frac{n-1}{2}}}dS_{\theta'} = \int\limits_{S^{n-2}}\frac{1}{\paren{1-t^2\theta_1^2}^{\frac{n-1}{2}}}dS_{\theta'}.\]
Let $L=\text{Lip}(\Gamma_1)$ denote the Lipschitz constant for $\Gamma_1$ i.e., the supremum of $|\nabla\chi^l|$ over all the coordinate charts which locally flatten $\Gamma_1$.
Then we see that $0\leq t(x')\leq L/\sqrt{1+L^2}$ and as such $I_{n}(x')$ is uniformly bounded and also depends smoothly on $x'$. We note in passing that 
\[\omega_{n-2}\leq I_{n}(x')\leq\omega_{n-2}(A_{nn}(x'))^{\frac{n-1}{2}},\]
where $\omega_{n-2}$ is the volume of the unit sphere $S^{n-2}$. Putting it all together,  we see that 
\begin{align*}
N(\mu;\sW)&\sim \frac{1}{(2\pi)^{n-1}}\int\limits_{\Gamma_1}\int\limits_{B_{x'}(\mu)}d\xi'd\sigma_{x'}\\
&\sim \frac{(4\pi)^{1-n}}{(n-1)}\int\limits_{\Gamma_1}I_n(x')\paren{\frac{\sqrt{A_{nn}(x')}}{\mu}-\frac{\lambda\mu}{\sqrt{A_{nn}(x')}}}^{n-1}_{+}d\sigma_{x'},
\end{align*}
which proves the theorem.
\end{proof}

Finally, Theorem \ref{thm:basic_spec_func_ineq} and Theorem \ref{thm:spec_count_func_boundary} just proved give the following relatively crude
but new estimate

\begin{corollary}
For convenience set $E_\lambda:=r_{_{\Omega_2}}A^{-1}_{\lambda}e_{_{\Omega_2}}-B^{-1}$. Then, in local coordinates,
\begin{equation}\label{eqn:spec_count_func_diff}
N(\mu;E_\lambda)\sim \frac{(4\pi)^{1-n}}{(n-1)}\int\limits_{\Gamma_1}I_n(x')\paren{\frac{|\hat{\nu}_n(x')|^{-1}}{\norm{S}^{-2}_{op}\mu}-\frac{\norm{S}^{-2}_{op}\lambda\mu}{|\hat{\nu}_n(x')|^{-1}}}^{n-1}_{+}\frac{dx'}{|\hat{\nu}_n(x')|}
\end{equation}
\end{corollary}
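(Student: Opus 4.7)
The corollary is essentially a bookkeeping exercise that combines the two main results of the subsection, namely the reduction inequality in Theorem~\ref{thm:basic_spec_func_ineq} and the Weyl-type asymptotic in Theorem~\ref{thm:spec_count_func_boundary}, together with the explicit form of the unit normal in local coordinates. So the plan is to chain these together and then rewrite all geometric quantities in terms of $\hat{\nu}_n(x')$.

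First, I would invoke Theorem~\ref{thm:basic_spec_func_ineq} to pass from $E_\lambda$ to the boundary pseudo-differential operator:
\[
N(\mu;E_\lambda) \leq N\bigl(\|S\|^{-2}_{op}\mu;\sW\bigr).
\]
Then apply Theorem~\ref{thm:spec_count_func_boundary} with the rescaled spectral parameter $\|S\|^{-2}_{op}\mu$ in place of $\mu$, giving
\[
N\bigl(\|S\|^{-2}_{op}\mu;\sW\bigr)\sim \frac{(4\pi)^{1-n}}{(n-1)}\int\limits_{\Gamma_1}I_{n}(x')\paren{\frac{\sqrt{A_{nn}(x')}}{\|S\|^{-2}_{op}\mu}-\frac{\|S\|^{-2}_{op}\lambda\mu}{\sqrt{A_{nn}(x')}}}^{n-1}_{+}d\sigma_{x'}.
\]

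Next I would rewrite everything in terms of the local unit normal. From the formula $\hat{\nu}(x')=(-\nabla_{x'}\chi,1)^t/\sqrt{A_{nn}(x')}$ we read off $\hat{\nu}_n(x')=1/\sqrt{A_{nn}(x')}$, so that $\sqrt{A_{nn}(x')}=|\hat{\nu}_n(x')|^{-1}$ (the sign is fixed since $A_{nn}=1+|\nabla_{x'}\chi|^2\ge 1$). The surface measure in local coordinates is $d\sigma_{x'}=\sqrt{A_{nn}(x')}\,dx'=dx'/|\hat{\nu}_n(x')|$, as noted just before Theorem~\ref{thm:spec_count_func_boundary}. Substituting these two identities into the display above yields precisely the right-hand side of \eqref{eqn:spec_count_func_diff}.

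There is no real obstacle here; the only things to be slightly careful about are that the positive-part truncation $(\cdot)_+^{n-1}$ is preserved under the rescaling $\mu\mapsto \|S\|^{-2}_{op}\mu$ (trivially, since $\|S\|^{-2}_{op}>0$), and that the inequality in Theorem~\ref{thm:basic_spec_func_ineq} is compatible with the $\sim$ in the stated asymptotic\,---\,which is why the result is phrased as an estimate rather than an equality. One could in principle sharpen this to genuine two-sided asymptotics by producing a matching lower bound for $N(\mu;E_\lambda)$ in terms of $N(\cdot;\sW)$ via a reverse Rayleigh-quotient argument, but that is beyond the scope of the stated corollary.
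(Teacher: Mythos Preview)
Your proposal is correct and matches the paper's approach exactly: the paper simply states that the corollary follows from combining Theorem~\ref{thm:basic_spec_func_ineq} with Theorem~\ref{thm:spec_count_func_boundary}, which is precisely the chain you write out, including the substitutions $\sqrt{A_{nn}(x')}=|\hat{\nu}_n(x')|^{-1}$ and $d\sigma_{x'}=dx'/|\hat{\nu}_n(x')|$. Your remark about the inequality versus the $\sim$ is also apt; the paper itself calls the result a ``relatively crude but new estimate'' rather than a sharp asymptotic.
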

We have not found a good physical interpretation of the above formula. Although the integrand can be given a coordinate invariant meaning (recall that the principal symbol is coordinate invariant), the expression is still unwieldy. It is still worth pointing out that \eqref{eqn:spec_count_func_diff} shows that the asymptotics depend on the geometry of the domain in the following ways:
\begin{enumerate}
\item The norm $\norm{S}_{op}$ depends on the volume of $\Omega_1$ and the distance from $\Gamma_1$ to $\Gamma$. This is because  $B^{-1}$ depends on these quantities via the (best constant in the) Poincar\'e inequality. 
\item Again let $L=\text{Lip}(\Gamma_1)$ denote the Lipschitz constant for $\Gamma_1$. It follows that $\sqrt{A_{nn}^l(x')}\leq \sqrt{1+L^2}$ and thus the integrand vanishes if $\sqrt{\lambda}\mu\geq\norm{S}^{-2}_{op}\sqrt{1+L^2}$.
\end{enumerate}

Evidently, the integral in the formula depends on the volume of $\Gamma_1$ and we end this section with the comforting observation that for large but fixed $\lambda$, we recover the ``Weylian" asymptotics as $\mu\to 0$:
\[N(\mu;(r_{_{\Omega_2}}A^{-1}_{\lambda}e_{_{\Omega_2}}-B^{-1}))\sim \frac{(4\pi)^{1-n}}{(n-1)}\omega_{n-2}\abs{\Gamma_1}\paren{\frac{\mu}{\norm{S}_{op}^{2}}}^{1-n}\sim \mathcal{O}(\mu^{1-n}).\] 

\section{Acknowledgements}
We are grateful to Andres Larrain-Hubach for reading an early draft of this article and providing useful comments which undoubtedly improved the exposition
in this work.

\thispagestyle{empty}
\bibliography{lib_papers} 
\bibliographystyle{amsplain}
\end{document}